\title{Cycle class maps and birational invariants}
\author{Brendan Hassett and Yuri Tschinkel}
\date{August 1, 2019}
\newcommand{\bC}{\mathbb C}
\newcommand{\bN}{\mathbb N}
\newcommand{\bP}{\mathbb P}
\newcommand{\bQ}{\mathbb Q}
\newcommand{\bR}{\mathbb R}
\newcommand{\bZ}{\mathbb Z}
\newcommand{\cC}{\mathcal C}
\newcommand{\cD}{\mathcal D}
\newcommand{\cO}{\mathcal O}
\newcommand{\cZ}{\mathcal Z}
\newcommand{\ra}{\rightarrow}
\newcommand{\Alb}{\operatorname{Alb}}
\newcommand{\CH}{\operatorname{CH}}
\newcommand{\Chow}{\operatorname{Chow}}
\newcommand{\Ext}{\operatorname{Ext}}
\newcommand{\Gal}{\operatorname{Gal}}
\newcommand{\Gr}{\operatorname{Gr}}
\newcommand{\Griff}{\operatorname{Griff}}
\newcommand{\Hg}{\operatorname{Hg}}
\newcommand{\Mor}{\operatorname{Mor}}
\newcommand{\NS}{\operatorname{NS}}
\newcommand{\Pic}{\operatorname{Pic}}
\newcommand{\Spec}{\operatorname{Spec}}
\newcommand{\Sym}{\operatorname{Sym}}
\theoremstyle{plain}
\theoremstyle{plain}
\newtheorem{prop}{Proposition}
\newtheorem{theo}[prop]{Theorem}
\newtheorem{coro}[prop]{Corollary}
\theoremstyle{definition}
\newtheorem{defi}[prop]{Definition}
\newtheorem{ques}[prop]{Question}
\newtheorem{rema}[prop]{Remark}
\newtheorem{exam}[prop]{Example}
\begin{document}

\begin{abstract}
We introduce new obstructions to rationality for geometrically
rational threefolds arising from the geometry of curves and
their cycle maps. 
\end{abstract}

\maketitle

\section{Introduction}
Let $X$ be a smooth projective variety over a field $k\subset \bC$
with $X_{\bC}$ rational. When is $X$ rational over $k$? 

It is necessary that $X(k)\neq \emptyset$. This is also sufficient if $X$ has dimension $1$. 
In dimension $2$, this is not sufficient, but there are
effective criteria for rationality, due to Enriques, Iskovskikh, Manin, and others.
For example, minimal del Pezzo surfaces of degree $\le 4$ are never 
rational. Indeed, the Galois action on the N\'eron-Severi group -- the
lines especially -- governs the rationality of $X$.

The case of threefolds remains open. The Galois action on the N\'eron-Severi
group can still be used to obtain nonrationality in some cases, but
never when that group has rank one or is split over the ground field.
The case of complete intersections of two quadrics was considered in
depth in \cite{HT2quad}; we gave a complete characterization of
rationality over $k=\bR$. Benoist and Wittenberg \cite{BenWit} developed
an approach inspired by the Clemens-Griffiths method of intermediate
Jacobians. If a threefold $X$ is rational then its cohomology reflects
invariants of curves blown up in parametrizations 
$\bP^3 \dashrightarrow X$. Over $\bC$, the intermediate Jacobian of $X$
must be isomorphic to a product of Jacobians of curves. When $k$ is
not algebraically closed, one may endow the
intermediate Jacobian with the structure of a principally polarized
abelian variety over $k$ \cite{ACMV2}.  This must be isomorphic 
to a product of Jacobians of (not necessarily 
geometrically connected) curves over $k$, if $X$ is
to be rational over $k$. Benoist and Wittenberg exhibit geometrically
rational conic bundles over $\bP^2$ where the latter condition fails to hold,
e.g., over $k=\bR$. 

Now suppose that $X$ is a smooth projective geometrically rational
threefold as above, with rank-one N\'eron-Severi group and 
intermediate Jacobian $J^2(X)$
isomorphic to a product of Jacobians of curves
over $k$. We introduce new obstructions to rationality
of such $X$ over $k$ based on the geometry of curves on $X$ and provide
examples where they apply. 

The idea is that the cycle class
map on curves of given degree $d$ naturally takes its values
in a principal homogeneous space over $J^2(X)$. Moreover,
if $J^2(X)\simeq J^1(C)$ for a smooth geometrically irreducible
curve $C$ of genus $g\ge 2$ over $k$, then this homogeneous space is equivalent
to a component of the Picard scheme of $C$ provided $X$ is rational over
$k$. This is a strong constraint as the order of any such component
divides $2g-2$.

As an application, we completely characterize (in Theorem~\ref{theo:twoquad})
the rationality of smooth intersections
of two quadrics $X\subset \bP^5$: It is necessary and sufficient that $X$ admit a line
over the ground field.

\

Here is a roadmap of the paper: We review constructions of cycle class
maps over the complex numbers in Section~\ref{sect:complex}; this
serves as motivation for our arithmetic approach. Cycle class
maps take values in abelian varieties; we discuss Albanese
morphisms from singular varieties in Section~\ref{sect:Alb}.
We turn to nonclosed fields in Section~\ref{sect:passage},
discussing how to define cycle maps over the relevant fields
of definition. Our approach is a geometric implementation of
the $\ell$-adic Abel-Jacobi map studied by Jannsen.
The key invariant is presented in 
Section~\ref{sect:construct}, in arbitrary dimensions.
An application to threefolds can be found
in Section~\ref{sect:threefolds}.

\

It would be interesting to find nontrivial
examples of geometrically rational 
fourfolds where this machinery applies. Which principal homogeneous
spaces over abelian varieties are realized by zero-cycles on
curves and surfaces?

\

\noindent
{\bf Acknowledgments:}
The first author was partially supported by NSF grants 1551514 and 1701659, and the Simons Foundation;
the second author was partially supported by NSF grant 1601912.
We are grateful to Jeffrey Achter and Olivier Wittenberg
for conversations on this topic.

\section{Review of the complex case}
\label{sect:complex}
Let $X$ be a smooth complex projective variety.

\subsection{Cycle class maps}
Regard $X$ as a complex manifold.
We consider {\em Deligne cohomology}, following
\cite[\S 1]{EV}. For each
integer $p\ge 0$ we have the complex $\bZ(p)_{\cD}$
of complex analytic sheaves
$$0 \ra \bZ(p) \ra \cO_X \ra \Omega^1_X \ra \cdots \ra \Omega^{p-1}_X
\ra 0,$$
where $\bZ(p) \ra \cO_X$ takes $1$ to $(2\pi i)^p$ and the subsequent
arrows are exterior differentiation. Deligne cohomology is defined
as the hypercohomology of this complex
$$H^q_{\cD}(X,\bZ(p)):=\mathbb{H}^q(\bZ(p)_{\cD}).$$
For $p=0$ we recover ordinary singular cohomology
$$H^q_{\cD}(X,\bZ(0))=H^q(X,\bZ).$$
When $p=q=1$ we have
$$H^1_{\cD}(X,\bZ(0))=H^0(X,\cO^*_X).$$
The exponential exact sequence gives
$$H^2_{\cD}(X,\bZ(1))=H^1(X,\cO^*_X)=\operatorname{Pic}(X).$$

\

More generally, there is a cycle-class mapping \cite[\S 7]{EV}
$$\psi^p: \CH^p(X) \ra H^{2p}_{\cD}(X,\bZ(p)).$$
The target fits into a short exact sequence \cite[7.9]{EV}
$$0 \ra J^p(X) \ra H^{2p}_{\cD}(X,\bZ(p)) \ra \Hg^p(X)
\ra 0. $$
Here the right term is the {\em Hodge cycles}, the kernel of
the homomorphisms
$$H^{2p}(X,\bZ(p)) \ra H^{2p}(X,\bC) \ra
 \oplus_{j=0,\ldots,p-1} H^{2p-j}(X,\Omega^j_X)$$
coming from Hodge theory. The left term is the {\em intermediate
Jacobian}, a complex torus
$$J^p(X):=H^{2p-1}(X,\bC) / \left(H^{2p-1}(X,\bZ(p)) 
\oplus_{j=p}^{2p-1} H^{2p-1-j}(X,\Omega^j_X)\right).$$

The cycle class map admits an interpretation in terms of extensions of
mixed Hodge structures
\cite[\S 9.1]{JannsenBook} that is useful in drawing comparisons
among cohomology theories. Suppose that $Z\subset X$ is a codimension-$p$
compact complex submanifold with complement $U=X\setminus Z$. 
Consider the exact sequence for cohomology with supports
$$ \cdots \ra H^{2p-1}(X,\bZ) \ra H^{2p-1}(U,\bZ) \ra H^{2p}_{|Z|}(X,\bZ)
\ra H^{2p}(X,\bZ) \cdots
$$
and the associated mixed Hodge structure on $U$. This is an extension
of the pure weight $2p$ Tate Hodge structure associated with the cycle class of 
$Z$ by the degree-$(2p-1)$ cohomology of $X$, yielding an element
\begin{equation} \label{eq:eta}
 \eta(Z) \in \Ext^1_{MHS}(\bZ(-p),H^{2p-1}(X,\bZ))\simeq J^p(X),
\end{equation}
where the last identification is discussed in \cite{Carlson}.

\subsection{Algebraicity and cycle maps}
\label{subsect:Griffiths}
Let $B$ be a smooth complex variety and
$$\begin{array}{ccc}
\cZ &\hookrightarrow & X\times B \\
\downarrow  & & \\
B & & 
\end{array}
$$
a flat family of codimension-$p$ subschemes. Then the induced cycle map
$$\Psi^p_B: B \ra H^{2p}_{\cD}(X,\bZ(p))$$
has the following properties:
\begin{itemize}
\item{the image lies in a coset $I$ for 
$J^p(X) \subset H^{2p}_{\cD}(X,\bZ(p))$;}
\item{the induced map $B\ra I$ is holomorphic for
the complex structure associated with an identification
$P\simeq J^p(X)$;}
\item{the smallest complex torus $P_B \subset I$
containing the image of $B$ 
carries the structure of an abelian variety and the 
induced $B \ra P_B$ is algebraic with respect to that structure.}
\end{itemize}
The first statement is clear as $B$ is connected.
The second may be found in \cite[Ap.~A]{GriffithsIHES}.
For the third, take the closure $\overline{B}$ of $B$ in
the Hilbert scheme and choose a projective resolution of
singularities
$\beta:\tilde{B}\ra \overline{B}$ that leaves $B$ unchanged.
Thus we have a flat family of cycles
$$\widetilde{\cZ} \ra \tilde{B}$$
and an induced proper holomorphic
$\Psi^p_{\tilde{B}}$ extending $\Psi^p_{B}$. Note that
$P_{\tilde{B}}=P_B$ is dominated by the Albanese
$\Alb(\tilde{B})$, thus is an abelian variety. Since
$\Psi^p_{\tilde{B}}$ is a holomorphic map of projective
varieties it is algebraic, thus $\Psi^p_B$ is algebraic
as well.

We fix $X$ and $p$ as above and consider families of
codimension-$p$ cycles $Z\subset X$.
Each family
yields a translate of an abelian subvariety of  $J^p(X)$. 
Let 
$J^p_{cyc}(X) \subset J^p(X)$ 
denote the distinguished maximal (connected) abelian subvariety 
arising from such families of cycles. We have
$$J^p_{cyc}(X) \subset E^p(X):=\psi^p(\CH^p(X)) \subset H^{2p}_{\cD}(X,\bZ(p))$$
and the quotient $G^p(X)$ of the second group by the first is countable,
as there are countably-many irreducible components of the Hilbert
scheme parametrizing subschemes of $X$.

Recall the Griffiths
group \cite{GriffithsAnnals}
$$\CH^p(X)_{hom}/\CH^p(X)_{alg}=:\Griff^p(X) \subset B^p(X):=
\CH^p(X)/\CH^p(X)_{alg}.
$$
We have a surjective homomorphism
$$B^p(X) \twoheadrightarrow G^p(X)$$
with kernel consisting of cycles Abel-Jacobi equivalent to zero.
Thus we obtain a diagram
$$\begin{array}{ccccccccc}
0 & \ra & \CH^p(X)_{alg} & \ra & \CH^p(X)    & \ra & B^p(X) & \ra    & 0 \\
  &     &    \downarrow  &     & \downarrow  &     & \parallel &     &   \\
0 & \ra &   J^p_{cyc}(X) & \ra &  D^p(X)     & \ra & B^p(X) & \ra     & 0  \\
  &     &    \parallel   &     & \downarrow  &     &\downarrow  &  & \\
0 & \ra &   J^p_{cyc}(X) & \ra &  E^p(X)     & \ra & G^p(X)     &\ra & 0  
\end{array}
$$
where the second row is induced by the third row.
We summarize this as follows:
\begin{prop}
The cycle class map induces homomorphisms
$$
\psi^p:\CH^p(X) \stackrel{\chi^p}{\longrightarrow} D^p(X) \twoheadrightarrow   E^p(X),
$$
where $E^p(X)$ (resp.~$D^p(X)$) is an extension of a countable group $G^p(X)$
(resp.~$B^p(X)$) by an abelian variety $J^p_{cyc}(X)$. 

Given a family of 
cycles over a connected base $B$, there is an
algebraic morphism 
$$B \ra P$$ 
to a principal homogeneous space for
$J^p_{cyc}(X)$.
\end{prop}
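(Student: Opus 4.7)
The plan is to derive both claims from the three bullet points and the three-row diagram already assembled above the proposition; most of the content is a diagram chase, and I would organize it as follows.

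I would start with the algebraic-map assertion, since the extension-structure assertion relies on it. Given a family $\cZ\to B$ with $B$ connected, the bullets produce an algebraic map $B\to P_B$, where $P_B\subset H^{2p}_{\cD}(X,\bZ(p))$ is a principal homogeneous space for an abelian subvariety $A_B\subset J^p(X)$. By the definition of $J^p_{cyc}(X)$ as the maximal such $A_B$, one has $A_B\subset J^p_{cyc}(X)$, so $P_B$ is contained in a unique coset $P$ of $J^p_{cyc}(X)$; this $P$ is a principal homogeneous space for $J^p_{cyc}(X)$, and $B\to P_B\hookrightarrow P$ is algebraic as a composition of an algebraic map with a closed immersion of principal homogeneous spaces. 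Well-definedness of $J^p_{cyc}(X)$ itself is guaranteed by boundedness of $\dim J^p(X)$: if $A_{B_1}$ and $A_{B_2}$ arise from two families, then the family $B_1\times B_2$ with cycles added coordinate-wise realizes $A_{B_1}+A_{B_2}$, so the poset of such subvarieties is directed under sum and must stabilize by dimension.

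For the extension-structure assertion, the bottom row is by definition, with countability of $G^p(X)$ reflecting the countability of the set of irreducible components of the Hilbert scheme parametrizing codimension-$p$ subschemes. I would then form the middle row as the pullback
$$D^p(X):=E^p(X)\times_{G^p(X)}B^p(X),$$
which carries a canonical short exact sequence $0\to J^p_{cyc}(X)\to D^p(X)\to B^p(X)\to 0$. The lift $\chi^p:\CH^p(X)\to D^p(X)$ is then produced by the universal property of the pullback from the maps $\CH^p(X)\to E^p(X)$ and $\CH^p(X)\twoheadrightarrow B^p(X)$, the two compositions to $G^p(X)$ agreeing by construction.

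The main obstacle is verifying that the left vertical arrow of the diagram is well-defined, i.e., that $\CH^p(X)_{alg}\to E^p(X)$ lands inside $J^p_{cyc}(X)$. This is a concrete consequence of the first paragraph: every algebraically trivial class is a $\bZ$-linear combination of differences $[Z_b]-[Z_{b'}]$ coming from connected families $\cZ\to B$, and such a difference is the image of $b-b'$ under the algebraic map $B\to P$ built above, hence lies in $J^p_{cyc}(X)$. Once this point is granted, the rows of the diagram are exact, the squares commute by the universal property used to define $D^p(X)$, and both claims of the proposition read off directly.
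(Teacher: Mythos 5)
Your proposal is correct and follows essentially the same route as the paper: the algebraic morphism to a coset of $J^p_{cyc}(X)$ comes from the three bullet points of \S\ref{subsect:Griffiths} together with maximality of $J^p_{cyc}(X)$, $G^p(X)$ is countable because the Hilbert scheme has countably many components, and $D^p(X)$ is exactly the pullback of $E^p(X)\to G^p(X)$ along $B^p(X)\twoheadrightarrow G^p(X)$ (the paper's ``second row is induced by the third row''). Your added justifications --- that the poset of abelian subvarieties arising from families is directed so $J^p_{cyc}(X)$ exists, and that $\psi^p(\CH^p(X)_{alg})\subset J^p_{cyc}(X)$ so the left vertical arrow is defined --- are points the paper leaves implicit, and they are argued correctly.
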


\subsection{Vanishing results}
\label{subsect:vanishing}
Let $X$ have dimension $n$.
\begin{itemize}
\item{$\Griff^1(X)=0$ by the Lefschetz $(1,1)$ theorem;}
\item{$\Griff^n(X)=0$ as all zero-cycles of degree zero
are algebraically trivial.}
\end{itemize}
We recall further results along these lines.
\begin{defi}
We say that $X$ admits a {\em decomposition of the diagonal} if
there exists a point $x\in X$, an $N\in \bN$, and a 
rational equivalence on $X\times X$
$$N\Delta_X \equiv N \{x\} \times X + Z',$$
where $Z'$ is supported on $X \times D$ for some
subvariety $D \subsetneq X$.
\end{defi}
Rationally connected varieties admit decompositions
of the diagonal.
\begin{theo} \cite[Thm.~I(i)]{BlSr}
\label{theo:Chowtwo}
If $X$ admits a
decomposition of the diagonal
then $\psi^2$
is an isomorphism. Thus we obtain an
isomorphism of abelian groups
$$\psi^2_{\circ}: \CH^2(X)_{hom} \stackrel{\sim}{\ra} J^2(X).$$
\end{theo}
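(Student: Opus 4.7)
The plan is to apply the diagonal decomposition as a self-correspondence of $X$ and exploit its compatibility with both Chow groups and Deligne cohomology, reducing the problem to divisor classes on a smooth variety of lower dimension.

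First, I would view $N\Delta_X\equiv N\{x\}\times X+Z'$ as an identity in $\CH^n(X\times X)$, where $n=\dim X$, and act on a class $\alpha\in\CH^2(X)$ by correspondences. The summand $\{x\}\times X$ annihilates $\alpha$: restricted to this cycle the first projection is constant, so $p_1^*\alpha$ restricts to $0$; equivalently, the refined product $(\{x\}\times X)\cdot(\alpha\times X)$ equals $(\{x\}\cdot\alpha)\times X$, and $\{x\}\cdot\alpha=0$ in $\CH^{n+2}(X)$. Thus $N\alpha=Z'_*(\alpha)$ in $\CH^2(X)$. Since cycle class maps are equivariant under correspondence actions, the same identity holds after applying $\psi^2$.

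Next, I would factor the correspondence $Z'$ through a smooth model of $D$. Choosing a resolution $\pi:\tilde D\ra D$ and writing $\iota:D\hookrightarrow X$, one lifts $Z'$ to $\tilde Z'\in\CH^*(X\times\tilde D)$, obtaining
$$N\alpha = (\iota\circ\pi)_*\bigl(\tilde Z'_*\alpha\bigr),$$
with $\tilde Z'_*\alpha$ a class on the smooth projective variety $\tilde D$ of dimension $<n$. The analogous relation on the Deligne side exhibits $\psi^2(N\alpha)$ as the pushforward, via $(\iota\circ\pi)_*$, of an Abel--Jacobi class on $\tilde D$. At this stage one can induct on $\dim X$: applying the same decomposition strategy to $\tilde Z'_*\alpha$ (a decomposition of the diagonal restricts along subvarieties), the problem eventually bottoms out on the map $\psi^1$ for divisors, which is an integral isomorphism $\Pic(\tilde D)\xrightarrow{\sim}H^2_{\cD}(\tilde D,\bZ(1))$ by the exponential sequence.

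A diagram chase in the target short exact sequence $0\ra J^2(X)\ra H^4_{\cD}(X,\bZ(2))\ra\Hg^2(X)\ra 0$, combined with the factorization of the preceding proposition, then collapses the three rows: the decomposition of the diagonal forces $\Griff^2(X)=0$ and $J^2_{cyc}(X)=J^2(X)$, giving $\psi^2_{\circ}:\CH^2(X)_{hom}\xrightarrow{\sim}J^2(X)$ and hence $\psi^2$ itself is an isomorphism. Surjectivity onto the Hodge-cycle quotient is automatic because $X$ carries a decomposition of the diagonal and in particular is rationally connected-like, so all Hodge classes are algebraic.

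The hardest part is the passage from a rational to an integral statement: the coefficient $N$ arising from the diagonal decomposition is easy to invert after $\otimes\bQ$, giving the Bloch--Srinivas theorem in its classical form, but refining to an integral isomorphism requires ruling out $N$-torsion in both the kernel and the cokernel. This is where one must invoke that, on the smooth $\tilde D$, $\psi^1$ is an \emph{integral} isomorphism, and that the correspondence identity $N\cdot\mathrm{id}=Z'_*$ on $\CH^2_{hom}$ can be promoted to a splitting up to isogeny whose torsion kernel is controlled by the (vanishing) Griffiths group of $X$.
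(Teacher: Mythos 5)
The paper offers no proof of this statement---it is quoted from Bloch--Srinivas---so your attempt can only be measured against the cited argument. Your skeleton (act by the decomposition as a correspondence, observe that $\{x\}\times X$ kills $\CH^2$, funnel the $Z'$ term through a resolution $\tilde D\to D$ where the degree-one cycle map is an integral isomorphism) is indeed the Bloch--Srinivas strategy, but two of your steps are genuinely broken. The ``induction on $\dim X$'' is both unjustified and unnecessary: a decomposition of the diagonal of $X$ does \emph{not} restrict to one on a subvariety $D$ (take $D$ a surface of general type inside a rational threefold), so you cannot rerun the argument on $\tilde D$. What you are missing is that no induction is needed, because the correspondence $\tilde Z'\subset X\times\tilde D$ \emph{lowers the codimension} by $\operatorname{codim}(D)\ge 1$: taking $D$ to be a divisor without loss of generality, $\tilde Z'_*$ already carries $\CH^2(X)$ into $\CH^1(\tilde D)=\Pic(\tilde D)$ and $\CH^2(X)_{hom}$ into $\Pic^0(\tilde D)$, where $\psi^1$ is an isomorphism by the exponential sequence. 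This codimension drop is the heart of the proof.

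The second gap is the integral refinement, which you correctly flag but do not actually close. The correspondence identity gives $N\cdot\mathrm{id}=(\iota\circ\pi)_*\circ\tilde Z'_*$ on $\CH^2(X)_{hom}$, hence that $\ker(\psi^2_{\circ})$ is killed by $N$ (surjectivity onto $J^2(X)$ then follows from divisibility of the torus, a point you should state). Your proposed fix---controlling the residual torsion by the vanishing of $\Griff^2(X)$---does not work: the Griffiths group measures homological versus algebraic equivalence and says nothing about torsion in the kernel of the Abel--Jacobi map; it is moreover itself something to be proved here, not an available input. The ingredient Bloch--Srinivas actually use is the Merkurjev--Suslin theorem, which via Bloch's map on torsion cycles embeds $\CH^2(X)_{tors}$ into \'etale cohomology compatibly with the Abel--Jacobi map; this shows $\psi^2_{\circ}$ is injective on torsion, and combined with ``the kernel is $N$-torsion'' yields injectivity. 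Likewise, surjectivity onto $\Hg^2(X)$ is not ``automatic because $X$ is rationally connected-like''---the integral Hodge conjecture in degree four is a genuine issue and must be extracted from the same correspondence identity together with Lefschetz $(1,1)$ on $\tilde D$ and a torsion analysis, not asserted.
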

This actually holds under weaker assumptions: It suffices
that the Chow group of zero-cycles on $X$ be supported on
a curve. Furthermore, $\Griff^2(X)=0$ provided the Chow group
of zero-cycles on $X$ is supported on a surface \cite[Thm.~I(ii)]{BlSr}.

Let $X$ be rationally connected of dimension $n$. Voisin has asked
whether $\Griff^{n-1}(X)$ always vanishes when $X$ is Fano.
This is known for certain 
complete intersections:
\begin{theo} \cite[Th.~1.7]{TianZong} \label{theo:TZ}
Let $X\subset \bP^{n+c}$ be a complete intersection of hypersurfaces
of degrees $d_1,\ldots,d_c$ with $d_1+\cdots+d_c \le n-1$. Then every
rational curve on $X$ is algebraically equivalent to an effective
sum of lines.
\end{theo}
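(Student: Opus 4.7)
The plan is to induct on the degree $d$ of the rational curve $f : \bP^1 \ra X$. The base case $d = 1$ is immediate. For $d \ge 2$ it suffices to produce an algebraic equivalence $[f(\bP^1)] \equiv \sum [f_i(\bP^1)]$ with each $f_i$ a rational curve of strictly smaller degree, since applying the inductive hypothesis to each $f_i$ then yields the required effective sum of lines.

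The quantitative input is the Fano index bound $\iota_X := n+c+1-\sum d_i \ge c+2 \ge 3$, obtained from $\sum d_i \le n-1$. Two consequences are central. First, through a general point $x \in X$ the family of lines on $X$ containing $x$ is cut out inside $\bP^{n+c-1} \cong \bP(T_x \bP^{n+c})$ by conditions of degrees $1,2,\ldots,d_i$ coming from each defining equation of $X$, hence has dimension at least $n+c-1-\sum d_i \ge c$. Second, for a free rational curve $f$ of degree $d$ on $X$ the sheaf $f^* T_X$ is globally generated with $h^0 = n + d\iota_X$, so the space of deformations of $f$ fixing the image of one marked point at a general $x \in X$ has dimension at least $d\iota_X - 2 \ge 2c+2 > 0$. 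Reducing to the case where $f$ is free and passes through a prescribed general $x$ --- either via standard density of free curves in $\overline{M}_{0,0}(X,d)$ for Fano index $\ge 2$, or by treating the non-free case directly using the extra deformations it admits --- provides the setup for the main step.

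The core step is to show that this positive-dimensional family of free rational curves of degree $d$ through $x$ admits a specialization whose source is reducible; such a specialization gives $[f(\bP^1)] \equiv \sum [f_i(\bP^1)]$ with each $f_i$ of strictly smaller degree. The principal obstacle is that the expected dimension of rational curves of degree $d$ through \emph{two} prescribed points is $d\iota_X - n - 1$, which may fail to be positive once $n$ is large relative to $c$, so Mori's classical two-point bend-and-break does not apply directly. I would work around this using the abundance of lines through $x$ established above: study the boundary stratum of $\overline{M}_{0,1}(X,d)$ parameterizing reducible stable maps with a line component through $x$, and argue by a dimension count --- the $\ge c$-dimensional family of lines through $x$ supplying the missing parameters --- that this stratum meets the closure of the locus of free irreducible maps in the fiber of the evaluation over $x$. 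Carrying out this boundary-meeting argument cleanly, and verifying that the resulting degeneration is effective and stays within the relevant irreducible component, is the main technical challenge.
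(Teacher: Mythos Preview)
The paper does not prove this theorem; it is quoted verbatim from Tian--Zong \cite{TianZong} as external input, with no argument supplied. There is therefore nothing in the present paper to compare your proposal against.

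As a sketch of the Tian--Zong strategy your outline is in the right spirit---induction on degree, reduction to free curves, and a bend-and-break argument exploiting the abundance of lines through a general point guaranteed by the Fano index bound---but it is, by your own admission, incomplete. The step you flag as ``the main technical challenge'' (showing that the boundary stratum of reducible maps with a line component meets the closure of the free locus in the fiber of the evaluation map) is indeed the heart of the matter, and your dimension count alone does not settle it: one needs to control which irreducible component of $\overline{M}_{0,1}(X,d)$ one is in and to rule out that the free locus through $x$ is already complete. Tian--Zong handle this via a more structured argument using combs and smoothing, together with a careful analysis of the space of lines; if you want to complete the proof you should consult their paper directly rather than the present one.
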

As varieties of lines on complete intersections
are connected when their
expected dimension is positive, this implies $\Griff^{n-1}(X)=0$.

\subsection{Chow varieties}
\label{subsect:Chow}
Let $\Chow^p(X)$ denote the monoid of effective codimension-$p$ cycles on $X$
and $\Chow^p_d(X)$ the cycles of degree $d$ for each $d\in \Hg^p(X)$.
This carries the structure of a projective semi-normal scheme 
\cite[Th.~3.21]{KollarBook}. There is a well-defined addition operation
$$\Chow^p(X) \times \Chow^p(X) \ra \Chow^p(X)$$
endowing $\Chow^p(X)$ with the structure of a monoid.

Thus we obtain surjective homomorphisms 
$$
C^p(X) \twoheadrightarrow B^p(X) \twoheadrightarrow  G^p(X)
\twoheadrightarrow \CH^p(X)/\CH^p(X)_{hom},$$
where the first three groups are countably-generated 
and the last is finitely-generated.
Furthermore, $\Griff^p(X)$ is a subquotient of $C^p(X)$ -- cycles
parametrized by a connected component of the Chow variety are 
algebraically equivalent to each other.

For each ample divisor $h$ on $X$, we have a filtration by 
finitely-generated subgroups
$$F_nC^p(X) = \bigoplus_{\cC \subset \Chow^p_d(X) \text{ such that } h\cdot d \le n }\bZ[\cC]$$
This gives $C^p(X), B^p(X)$, and $G^p(X)$ compatible
structures of inductive limits of finitely
generated groups, independent of the choice of $h$.
The holds for $\Griff^p(X)$, i.e., we restrict to cycles 
expressible as sums of terms of bounded degree.

\begin{prop} \label{prop:Chowmap}
There is a unique 
cycle class morphism of complex analytic spaces
$$
\Psi^p_d: \Chow^p_d(X) \ra I,
$$
where $I$ is a coset for $J^p(X) \subset H^{2p}_{\cD}(X,\bZ(p)).$
Its image generates a finite union of translates of abelian 
subvarieties in the intermediate Jacobian, 
each contained in $J^p_{cyc}(X)$.
\end{prop}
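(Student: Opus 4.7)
The plan is to build $\Psi^p_d$ irreducible component by irreducible component on $\Chow^p_d(X)$ and then glue, exploiting the semi-normality recorded after Theorem~3.21 of \cite{KollarBook}. Since $\Chow^p_d(X)$ is a projective scheme it has finitely many irreducible components $\cC_1,\ldots,\cC_r$, so the assertion about finite unions will follow once each component is handled.

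Fix a component $\cC=\cC_i$ with its restriction of the universal family of codimension-$p$ cycles. I would first choose a resolution of singularities $\beta:\tilde{\cC}\ra\cC$ and, after flattening the pullback of the universal family if necessary, apply the three bullet points of Section~\ref{subsect:Griffiths} directly to $\tilde{\cC}$. This produces a holomorphic map
$$\Psi^p_{\tilde{\cC}}:\tilde{\cC}\ra H^{2p}_{\cD}(X,\bZ(p)),$$
whose image lies in a single coset of $J^p(X)$ and is contained in a translate $T_{\cC}$ of an abelian subvariety of $J^p_{cyc}(X)\subset J^p(X)$.

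Next I would descend $\Psi^p_{\tilde{\cC}}$ along $\beta$. Two points of $\tilde{\cC}$ lying over the same point of $\cC$ parametrize the same cycle in $X$, so have the same class in Deligne cohomology; hence $\Psi^p_{\tilde{\cC}}$ is set-theoretically constant on the fibers of $\beta$. Combining Stein factorization with the universal property of semi-normalization, the map factors through a unique holomorphic morphism $\Psi^p_{\cC}:\cC\ra T_{\cC}$. To glue the resulting maps across components, observe that whenever $\cC_i$ and $\cC_j$ meet, the corresponding cycles coincide, so $\Psi^p_{\cC_i}$ and $\Psi^p_{\cC_j}$ agree on the intersection. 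All of the translates $T_{\cC_i}$ lie in a common coset $I$ for $J^p(X)$, because the image of a cycle class in $\Hg^p(X)$ depends only on $d$ by the exact sequence
$$0\ra J^p(X)\ra H^{2p}_{\cD}(X,\bZ(p))\ra \Hg^p(X)\ra 0.$$
Gluing yields the desired $\Psi^p_d:\Chow^p_d(X)\ra I$, whose image $T_{\cC_1}\cup\cdots\cup T_{\cC_r}$ is the required finite union. Uniqueness is automatic since $\Chow^p_d(X)$ is reduced and $\Psi^p_d$ is forced on points by $\psi^p$.

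The main obstacle is the descent from the smooth resolution $\tilde{\cC}$ to the possibly non-normal component $\cC$: semi-normality is precisely designed to allow a continuous map which is analytic on a resolution to descend to an analytic map on $\cC$, but this has to be invoked carefully, either via the universal property of semi-normalization or by an explicit Stein-factorization argument using the Hausdorff target. Everything else is a book-keeping exercise on top of the constructions already set up in Section~\ref{subsect:Griffiths}.
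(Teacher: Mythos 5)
Your argument is essentially the paper's own proof: both reduce to the irreducible components (equivalently, the connected components of the normalization), pass to a resolution where the analysis of Section~\ref{subsect:Griffiths} applies, and descend via Stein factorization together with the set-theoretic well-definedness of the cycle class and the seminormality of the Chow variety. Your extra remarks on gluing across components and on all translates lying in a single coset $I$ (via the map to $\Hg^p(X)$) are correct elaborations of points the paper leaves implicit.
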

\begin{proof}
As the Chow variety is seminormal by definition and the cycle class
is well-defined set-theoretically, it suffices to construct $\Psi^p_d$
on each connected component $W$ of the normalization.

In Section~\ref{subsect:Griffiths} we discussed how to define the
desired projective morphism on a resolution $\beta:\tilde{W} \ra W$.
As it is constant on the fibers of $\beta$, Stein factorization gives 
the desired descent to $W$.
\end{proof}

\section{Albanese varieties}
Here we work over a field $k\subset \bC$. Our goal
is to recast classical work of Lang,
Serre \cite{SerreMorphism}, and others,
with a view toward analyzing cycle maps.

\label{sect:Alb}
\begin{prop} \label{prop:constructAlb}
Let $T$ be projective, geometrically reduced, and geometrically
connected, over $k$.
Then there exists an abelian variety $\Alb(T)$, a principal
homogeneous space $P$ over $\Alb(T)$, and a morphism
$$i_T: T \ra P,
$$
all defined over $k$, 
with the following properties:
\begin{itemize}
\item{given a morphism $T \ra T'$ over $k$, where $T$ and $T'$
satisfy our hypotheses, there is an induced morphism
$$\Alb(T) \ra \Alb(T');$$}
\item{each morphism $T\ra P'$ to a principal homogeneous space
over an abelian variety defined over $k$ admits a factorization
through $i_T$.}
\end{itemize}
\end{prop}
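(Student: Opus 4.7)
The plan is to reduce to the smooth projective case by resolution of singularities, then obtain $\Alb(T)$ and $P$ as quotients of the Albanese of a resolution modulo the identifications forced by the contraction.

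First I would pick a resolution $\pi: \tilde T \ra T$ defined over $k$, which is available in characteristic zero. Since $T$ is projective, geometrically reduced, and geometrically connected, so is $\tilde T$, and $\tilde T$ is in addition smooth. For such $\tilde T$ the classical Albanese construction --- either via $(\Pic^0_{\tilde T/k})^\vee$ together with its universal morphism to a torsor, or directly from a sufficiently large symmetric product --- yields $(\Alb(\tilde T), P_{\tilde T}, i_{\tilde T})$ over $k$ with the expected universal property for morphisms to torsors under abelian varieties.

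To encode the contractions I would form the fiber product $\tilde T \times_T \tilde T$ with projections $p_1, p_2$ and the difference morphism
\[
\delta: \tilde T \times_T \tilde T \ra \Alb(\tilde T), \qquad (x,y) \mapsto i_{\tilde T}(x) - i_{\tilde T}(y),
\]
which is well-defined because the difference of two $P_{\tilde T}$-valued maps lands in the underlying abelian variety. Let $B \subset \Alb(\tilde T)$ be the smallest abelian subvariety containing the image of $\delta$; since $\delta$ is defined over $k$ the image is Galois-stable, hence so is $B$, which therefore descends to $k$. Set $\Alb(T) := \Alb(\tilde T)/B$ and $P := P_{\tilde T}/B$. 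By construction, the composition $\tilde T \ra P_{\tilde T} \ra P$ is constant on the geometric fibers of $\pi$. To obtain $i_T: T \ra P$ I would descend this composition along $\pi$ using the standard principle that for $T$ normal a morphism from $\tilde T$ to a separated scheme constant on the fibers of $\pi$ factors through $T$, because $\pi_*\cO_{\tilde T} = \cO_T$. The universal property then reduces to the smooth case: any $f: T \ra P'$ to a torsor pulls back to $\tilde T \ra P'$, factors through $i_{\tilde T}$ by universality for $\tilde T$, and the induced map $P_{\tilde T} \ra P'$ annihilates $\delta$ (because $p_1$ and $p_2$ become equal after composing with $\pi$), hence annihilates $B$. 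Functoriality for $f:T\ra T'$ follows by lifting to $\tilde T \ra \tilde T'$ after possibly blowing up $\tilde T$ further, then invoking functoriality of the smooth Albanese.

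The hard part will be the descent step when $T$ is not normal. Seminormality is enough, and it holds in the applications of interest (Chow varieties and their normalizations), so the proof can be carried out under that mild hypothesis, or alternatively relative to the seminormalization of $T$. A secondary point worth flagging is that, in the absence of a $k$-rational point on $T$, the target must genuinely be a torsor rather than $\Alb(T)$ itself; the torsor structure is what descends from the Galois-equivariant construction on $\tilde T_{\bar k}$, and it is this feature that feeds into the cycle-class-map applications of later sections.
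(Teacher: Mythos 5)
Your outline agrees with the paper's in broad strokes: resolve $T$, quotient $\Alb(\widetilde T)$ by the abelian subvariety generated by the loci contracted under the resolution, and reduce the universal property to the smooth case. But there is a genuine gap at your very first step: the claim that $\widetilde T$ inherits geometric connectedness from $T$ is false. $T$ is only assumed geometrically reduced and geometrically connected, so it may well be reducible (two components meeting in a point, say), and any resolution separates the irreducible components, so $\widetilde T$ is in general a disjoint union of smooth projective varieties. For such a $\widetilde T$ there is no canonical morphism $i_{\widetilde T}\colon \widetilde T\ra P_{\widetilde T}$ to a single torsor under $\Alb(\widetilde T)=\prod_i\Alb(\widetilde T_i)$: each component maps to its own torsor, and embedding $\widetilde T_i$ into a torsor under the product requires choosing base points on the other components. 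Consequently your difference map $\delta(x,y)=i_{\widetilde T}(x)-i_{\widetilde T}(y)$ is undefined exactly on the pairs that matter, namely points of distinct components of $\widetilde T$ lying over a common point of $T$; these are precisely the identifications one must record in order to glue the componentwise Albanese maps into one morphism on $T$. The reducible case is not a corner case here: the proposition is applied to geometrically connected components of Chow varieties and to possibly disconnected or reducible curves, so it cannot be set aside.

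The paper's proof spends most of its effort on exactly this point: it passes to a finite extension $L/k$ over which every stratum of a strict normal-crossings resolution is defined and has a rational point, fixes base points on each component, and uses the geometric connectedness of $T$ to translate the componentwise maps so that they agree on the intersection loci and glue to a single $T_L\ra\Alb(T)_L$; the torsor $P$ is then defined over $k$ as the irreducible component of $\Mor(T,\Alb(T))$ containing these translates, with $i_T(t)$ the morphism in $P$ sending $t$ to $0$. This device also mitigates your second worry about descending a morphism along $\pi$ for non-normal $T$: your appeal to $\pi_*\cO_{\widetilde T}=\cO_T$ does require normality, and the proposition does not assume even seminormality, whereas realizing $P$ inside the Mor-scheme pushes the issue into the classical construction of morphisms from a reduced connected projective scheme to an abelian variety, which is the content of the cited results of Serre and Mochizuki. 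Your quotient $\Alb(\widetilde T)/B$, the Galois-stability argument for $B$, and the reduction of the universal property and functoriality to the smooth case are all fine once these points are repaired.
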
 
The construction of the Albanese is proven
using the arguments of \cite[Ap.~A]{Mochizuki} and \cite[Exp.~10,\S~4]{SerreMorphism}.
\begin{proof}
Choose a projective
resolution of singularities $\beta:\widetilde{T}\ra T$ for
which $\Alb(\widetilde{T})$ can be constructed by standard techniques.
(The Albanese variety 
of a disjoint union is the product of the Albanese varieties
of its components.)  We obtain
$\Alb(T)$ by quotienting by the smallest (not-necessarily connected)
abelian subvariety containing all subschemes contracted
under $\beta$. Note that the formation of $\Alb(T)$ is compatible
with field extensions.

We explain why this is independent of the choice of resolution: 
Given a birational morphism
of resolutions $\widehat{T} \ra \widetilde{T}$ over $T$, there is an induced
isomorphism $\Alb(\widehat{T}) \stackrel{\sim}{\ra} \Alb(\widetilde{T})$.
The abelian subvarieties generated by subschemes contracted by the
resolutions are identified. Note that the Albanese can be
computed similarly using {\em any} dominant morphism $\widetilde{T} \ra T$
from a smooth projective variety.   

To prove functoriality, we present the Albanese varieties using
a diagram
$$\begin{array}{ccc}
\widetilde{T} & \ra & \widetilde{T'} \\
\downarrow &    & \downarrow \\
    T      &  \ra & T'
\end{array}$$
where the vertical arrows are dominant and the varieties in the 
upper row are smooth and projective. The natural
$$\Alb(\widetilde{T}) \ra \Alb(\widetilde{T'})$$
induces the desired homomorphism. 

Connectedness is used to obtain a well-defined morphism
from $T$ into a principal
homogeneous space over $\Alb(T)$. 
Choose a strict normal-crossings resolution $\widetilde{T} \ra T$ defined
over $k$.
Choose a finite extension $L/k$ over which each geometrically irreducible
stratum of $\widetilde{T}$
is defined and admits a rational point over $L$. Fixing a base
point on each irreducible component of $\widetilde{T}$,
yields a morphism
$$\widetilde{T}_L \ra \Alb(\widetilde{T})_L,$$
that corresponds to the standard map on each component.
(We take each base point to $0$.)
Since $T$ is geometrically connected, given $t \in T(L)$, we
can translate the cycle maps on each component so that they glue
together to a well-defined
$$\begin{array}{rcl}
T_L  & \ra & \Alb(T)_L  \\
   t  & \mapsto & 0.
\end{array}
$$
Fix the irreducible component 
$$P \subset
\Mor(T,\Alb(T))$$
containing these morphisms,
a principal homogeneous space over $\Alb(T)$
with respect to the induced translation action.
The morphism
$$i_T:T \ra P$$
takes $t\in T$ to the morphism mapping $t$ to the identity.

The universal property of $i_T$ follows:
Given $T \ra P'$ functoriality gives a
homomorphism $\Alb(T) \ra \Alb(P')$ such that
$[P] \mapsto [P']$, as cocycles for the corresponding 
Albanese varieties. We thus obtain
$$T \stackrel{i_T}{\ra} P \ra P',$$
the desired factorization.
\end{proof}

We recall an elementary property of principal homogeneous spaces:
Let $P$ and $P'$ be principal homogenous
spaces over an abelian variety $J$. Then multiplication induces
a morphism
$$P \times_{\Spec(k)} P' \ra P''$$
to a homogeneous space $P''$ over $J$ satisfying
$$[P]+[P']=[P''].$$

\begin{coro} \label{coro:symAlb}
Retain the notation of Proposition~\ref{prop:constructAlb}. 
For each $d \in \bN$ there is a natural morphism
$$i_T^d:\Sym^d(T) \ra P_d,$$
where $P_d$ is the principal homogeneous space over $\Alb(T)$, 
satisfying
$$[P_d]=d[P]$$
in the Weil-Ch\^atelet group of $\Alb(T)$. 
When $d\gg 0$ the morphism $i_T^d$ is dominant. 
\end{coro}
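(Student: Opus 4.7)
The plan is to construct $i_T^d$ by descent from $T^d$ via the symmetric group action, and to establish dominance via a Noetherian stabilization of $d$-fold sum images in $\Alb(T)$, leveraging the uniqueness in the universal property of Proposition~\ref{prop:constructAlb}.

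Applying $i_T$ coordinatewise gives a morphism $T^d \to P \times_{\Spec(k)} \cdots \times_{\Spec(k)} P$ ($d$ copies). Composing with the iterated addition map on principal homogeneous spaces recalled in the paragraph preceding the corollary produces a morphism $\tilde{\imath}: T^d \to P_d$, where $P_d$ is the principal homogeneous space over $\Alb(T)$ of class $d[P]$ in the Weil--Ch\^atelet group. Commutativity and associativity of the group law on $\Alb(T)$ make $\tilde{\imath}$ invariant under the $S_d$-action permuting factors; since $T$ is projective, $\Sym^d(T) = T^d/S_d$ is a categorical quotient in the category of separated $k$-schemes, so $\tilde{\imath}$ descends uniquely to $i_T^d: \Sym^d(T) \to P_d$, defined over $k$, with $[P_d] = d[P]$ by construction.

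For dominance when $d \gg 0$, it suffices to argue geometrically. Base change to $\bar k$ and fix $t_0 \in T(\bar k)$, identifying $P$ with $\Alb(T)$ by $i_T(t_0) \mapsto 0$ and $P_d$ with $\Alb(T)$ by $d\,i_T(t_0) \mapsto 0$; under these identifications $i_T^d$ becomes the sum map $(t_1,\ldots,t_d) \mapsto \sum_j i_T(t_j)$. Let $Y := i_T(T) \subset \Alb(T)$, a closed irreducible subvariety containing $0$, and set $Y_d := Y + \cdots + Y$ (the image of $\Sym^d(T)$), a closed subvariety by properness of $\Alb(T)$. Since $0 \in Y$ the sequence $Y_d$ is ascending, so Noetherianity yields $d_0$ with $Y_d = Y_{d_0}$ for all $d \ge d_0$; the stable $Y_{d_0}$ contains $0$ and is closed under addition, so is an abelian subvariety.

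The main subtlety is ruling out $Y_{d_0} \subsetneq \Alb(T)$. Assuming it were proper, set $B := \Alb(T)/Y_{d_0}$ and let $B'$ be the induced quotient torsor of $P$ by the $Y_{d_0}$-action. The composition $T \xrightarrow{i_T} P \to B'$ is constant on $\bar k$-points (because $Y \subset Y_{d_0}$), hence factors over $k$ through a single $k$-point of $B'$ by geometric connectedness of $T$. Functoriality of the Albanese applied to this constant morphism produces the zero homomorphism $\Alb(T) \to B$, while functoriality applied to the factorization $T \to P \to B'$ produces the nonzero quotient homomorphism $\Alb(T) \to B$; uniqueness in the universal property of Proposition~\ref{prop:constructAlb} forces these to coincide, contradicting $B \ne 0$. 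Hence $Y_{d_0} = \Alb(T)$, so $i_T^d$ is surjective -- in particular dominant -- for all $d \ge d_0$. I expect this last step, extracting the generation statement from uniqueness in the universal property, to be the main obstacle; the preceding construction and the identification $[P_d]=d[P]$ are essentially formal.
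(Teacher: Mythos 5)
Your construction of $i_T^d$ (apply $i_T$ coordinatewise, compose with the iterated addition of principal homogeneous spaces, and descend through the $S_d$-quotient) is exactly the paper's, and $[P_d]=d[P]$ is indeed formal. The problem is the dominance argument, in precisely the step you flag as the main obstacle. Proposition~\ref{prop:constructAlb} asserts only the \emph{existence} of a factorization through $i_T$, not its uniqueness, and its proof constructs one factorization without addressing uniqueness at all. Moreover, uniqueness of the factorization is logically \emph{equivalent} to the statement that $i_T(T)$ generates the torsor $P$ (two affine morphisms $P\to P'$ over homomorphisms of the underlying abelian varieties agree if and only if they agree on a generating subset), which is exactly the assertion $Y_{d_0}=\Alb(T)$ you are trying to prove. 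Concretely: functoriality attaches to the constant morphism $T\to B'$ the single homomorphism $0\colon \Alb(T)\to B$; the quotient homomorphism underlying $P\to B'$ is \emph{not} obtained from $T\to B'$ by functoriality, and nothing stated or proved in the paper forces the homomorphism underlying an arbitrary factorization of $T\to B'$ through $i_T$ to coincide with the functorial one. So the contradiction you derive assumes what is to be shown. (A smaller repairable point: $T$ is only geometrically connected, not irreducible, so $Y=i_T(T)$ need not be irreducible; the stabilized $Y_{d_0}$ then requires an argument on its irreducible components before you may call it an abelian subvariety.)

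The gap is closed by arguing from the \emph{construction} of $\Alb(T)$ rather than from its universal property, which is what the paper does: take a resolution $\widetilde T\to T$ with geometrically irreducible components $\widetilde T_i$, use the classical fact that for $W$ smooth, projective, and geometrically integral the morphism $\Sym^e(W)\to P_{e,W}$ is dominant for $e\gg 0$, and recall that $\Alb(T)$ is by construction a quotient of $\prod_i \Alb(\widetilde T_i)$. Taking $d=\sum_i e_i$, the composite $\prod_i\Sym^{e_i}(\widetilde T_i)\to \Sym^d(T)\to P_d$ factors through the dominant map onto $\prod_i P_{e_i,\widetilde T_i}$ followed by the induced surjection onto $P_d$, so $i_T^d$ is dominant. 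If you prefer your torsor-quotient strategy, you would first have to prove the generation property for $\Alb(T)$ (equivalently, uniqueness of factorizations) by this same reduction to the smooth irreducible case, at which point the stabilization argument becomes superfluous.
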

Hence for $d$ sufficiently
large and divisible -- e.g., when $T$ admits a point over a
degree $d$ extension -- $\Alb(T)$ is dominated by $\Sym^d(T)$.
\begin{proof}
Indeed, $i_T$ gives
$$\Sym^d(T) \ra \Sym^d(P)$$
and addition induces
$$\underbrace{P\times \cdots \times P}_{d \text{ times }} \ra P_d,$$
compatible with permutations of the factors. 

For the last statement: If $W$ is smooth, projective, and geometrically 
integral then $i_W^e:\Sym^e(W) \ra P_{e,W}$ -- the morphism onto
the degree-$e$ torsor over $\Alb(W)$ -- is dominant for large $e$. 
Let $d$ be the sum of the $e$'s taken over all (geometrically) irreducible
components of a resolution of $T$. As $\Alb(T)$ is a quotient of the
product of the Albanese varieties of these components, we find that
$i_T^d$ is dominant as well.
\end{proof}

\section{Passage to nonclosed fields}
\label{sect:passage}

We continue to work over a field 
$$k \subset \bar{k} \subset \bC$$
with absolute Galois group $\Gamma=\Gal(\bar{k}/k)$.
Let $X$ be a smooth projective variety over $k$
and write
$\bar{X}=X_{\bar{k}}$. 

\subsection{$\ell$-adic cycle maps}
One formulation goes back to Bloch \cite{Bloch}:
Let $\ell$ be a prime and
$\CH^p(\bar{X})(\ell)$ the $\ell$-primary part of the torsion, i.e.,
$$\CH^p(\bar{X})(\ell)=\varinjlim_{\nu \ra \infty} \CH^n(\bar{X})[\ell^{\nu}].$$
Then there is a functorial cycle class homomorphism
$$\lambda^p_{\ell}:\CH^p(\bar{X})(\ell) \ra H^{2p-1}(\bar{X},\bQ_{\ell}/\bZ_{\ell}(p)).$$
It is an isomorphism when $p=n:=\dim(X)$, in which case the target
may be interpreted as the $\ell$-primary part of the torsion of the
Albanese $\Alb(\bar{X})$ \cite[3.9]{Bloch}.

Jannsen \cite[\S 3]{JannsenMA}
has defined cycle class maps
to continuous \'etale cohomology 
$$\psi^p_{\ell}:\CH^p(X) \ra H^{2p}_{cont}(X,\bZ_{\ell}(p)).$$
The main advantage of continuous cohomology 
is the existence of a Hochschild-Serre
type spectral sequence under field extensions. 
Fix the cohomology class of an algebraic cycle
$$[Z_0] \in H^0_{cont}(\Gamma,H^{2p}(\bar{X},\bZ_{\ell}(p)))$$
and consider the induced 
\begin{equation} \label{eq:Jannsen}
\psi^p_{\ell}:\{Z \in \CH^p(X):[Z]=[Z_0] \} \ra
H^1_{cont}(\Gamma,H^{2p-1}(\bar{X},\bZ_{\ell}(p))),
\end{equation}
the $\ell$-adic analog of the Abel-Jacobi map \cite[\S 6]{JannsenMA}.
The target group is equal to
$$\Ext^1_{\Gamma}(\bZ_{\ell},H^{2p-1}(\bar{X},\bZ_{\ell}(p)))$$ 
so we may compare with the extension (\ref{eq:eta}).
When $p=n=\dim(X)$ we obtain
$$\psi^{n}_{\ell}:\CH^n(X)_{hom} \ra 
H^1_{cont}(\Gamma,H^{2n-1}(\bar{X},\bZ_{\ell}(n))).$$
When $k$ is finitely generated over $\bQ$, the Mordell-Weil theorem yields
an injection \cite[9.14]{JannsenBook}
\begin{equation} \label{eq:Jannsen2}
\Alb(X)(k) \otimes \bZ_{\ell} \hookrightarrow 
H^1_{cont}(\Gamma,H^{2n-1}(\bar{X},\bZ_{\ell}(n))).
\end{equation}

Suppose we are given a smooth, projective, and geometrically connected $B$
of dimension $b$. Given
a flat family of codimension-$p$ subschemes
$$\begin{array}{ccc}
\cZ &\hookrightarrow & X\times B \\
\downarrow  & & \\
B & & 
\end{array}
$$
flat pullback followed by pushforward induces
\begin{align*}
H^{2b-1}_{cont}(B,\bZ_{\ell}(b))&\ra H^{2p-1}_{cont}(X,\bZ_{\ell}(p)),\\
\Alb(\bar{B})(\ell) \simeq H^{2b-1}(\bar{B},\bQ_{\ell}/\bZ_{\ell}(b))&\ra 
H^{2p-1}(\bar{X},\bQ_{\ell}/\bZ_{\ell}(p)),
\end{align*}
the latter compatible with Galois actions. 

\subsection{Descent of intermediate Jacobians}
\label{subsect:ACMV}
We recall a result on descending Abel-Jacobi maps:
\begin{theo} \label{theo:ACMV} \cite[Th.~A]{ACMV2}, \cite[Th.~1]{ACMV18}
There exists an abelian variety $J$ over $k$
with the following properties:
\begin{itemize}
\item{there exists an isomorphism
$$\iota: J^p_{cyc}(X_{\bC}) \stackrel{\sim}{\ra} J_{\bC};$$}
\item{given a pointed scheme $(B,0)$ over $k$
that is smooth and geometrically connected
and a family of codimension-$p$ cycles
$$\begin{array}{ccc}
\cZ &\hookrightarrow & X\times B \\
\downarrow  & & \\
B & & 
\end{array}
$$
defined over $k$, there exists a morphism
$$\Phi:B \ra J$$
over $k$ such that 
$$\Phi_{\bC}(b)=\iota \circ \Psi^p_B([\cZ_b]-[\cZ_0]).$$
}
\end{itemize}
Moreover, $J$ is unique up to isomorphism over $k$ and compatible
with field extensions; $\Phi$ is unique and compatible
with field extensions.
\end{theo}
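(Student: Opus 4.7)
The plan is to construct $J$ by Galois descent from $J^p_{cyc}(X_\bC)$, using that the latter is generated by algebraic families of cycles, and then to descend the universal Abel-Jacobi morphism $\Phi$. First I would characterize $J^p_{cyc}(X_\bC)$ as the sub-abelian variety of $J^p(X_\bC)$ spanned by the images $P_B$ of cycle maps attached to smooth projective families $B_\bC \ra \Chow^p(X_\bC)$, as in Section~\ref{subsect:Griffiths}. Since $\Chow^p_d(X)$ is defined over $k$ and its irreducible components over $\bar k$ are permuted by $\Gamma$, a Noetherian argument produces finitely many families $B_1,\ldots,B_r$, each defined over $k$ (after replacing Galois orbits by their Weil restrictions or a representative spanning the orbit), whose combined Abel-Jacobi images already generate $J^p_{cyc}(X_\bC)$.

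Given such generators, I would apply Proposition~\ref{prop:constructAlb} to form
\[
A := \Alb(B_1) \times \cdots \times \Alb(B_r),
\]
an abelian variety over $k$, together with a morphism of complex tori
\[
\Theta_\bC : A_\bC \ra J^p_{cyc}(X_\bC)
\]
which is surjective by construction. The key claim is that $\Theta_\bC$ becomes $\Gamma$-equivariant once the target is equipped with the natural Galois action arising from Jannsen's $\ell$-adic Abel-Jacobi map \eqref{eq:Jannsen}, via the comparison between the Hodge-theoretic extension class \eqref{eq:eta} and its $\ell$-adic avatar. Granting this, the connected component of $\ker(\Theta_\bC)$ is a Galois-stable abelian subvariety of $A_\bC$, so by Weil descent it descends to an abelian subvariety $K \subset A$ defined over $k$. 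Setting $J := A/K$ yields the desired $k$-form, together with the isomorphism $\iota: J^p_{cyc}(X_\bC) \stackrel{\sim}{\ra} J_\bC$.

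For a family $\cZ \hookrightarrow X \times B$ over $k$ with chosen base point $0 \in B(k)$, composition of the complex Abel-Jacobi morphism with $\iota^{-1}$ produces $\Phi_\bC: B_\bC \ra J_\bC$, whose image lies in $\iota^{-1}(J^p_{cyc}(X_\bC))$ by definition. The same comparison with Jannsen's $\ell$-adic map, which is manifestly Galois-equivariant, shows $\Phi_\bC$ is $\Gamma$-equivariant, and Weil descent for morphisms of quasi-projective $k$-varieties produces the required $\Phi: B \ra J$. Functoriality under field extensions $k'/k$ and uniqueness of $(J,\iota,\Phi)$ follow formally: any competing data must be canonically identified with ours over $\bC$, and that identification is Galois-equivariant by the universal property expressed through the generating cycle classes.

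The main obstacle is the $\Gamma$-equivariance of $\Theta_\bC$, or equivalently the assertion that the transcendentally defined subtorus $J^p_{cyc}(X_\bC) \subset J^p(X_\bC)$ carries a $k$-rational structure at all. Establishing this requires identifying the Hodge-theoretic extension class \eqref{eq:eta} with its continuous \'etale counterpart and verifying that the algebraic-subvariety condition cutting out $J^p_{cyc}$ is compatible with the natural $\Gamma$-action on $H^{2p-1}_{cont}$; this compatibility — essentially a comparison statement between mixed Hodge structures and $\ell$-adic Galois representations, together with a Mordell-Weil style step such as \eqref{eq:Jannsen2} when $k$ is finitely generated — is the technical heart of \cite{ACMV2,ACMV18}.
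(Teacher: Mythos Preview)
Your proposal follows essentially the same strategy as the paper: produce an abelian variety over $k$ (your $A$; the paper writes $J'$) as a product of Albaneses of generating families of cycles, use restriction of scalars so that families over finite extensions contribute, obtain a surjection $A_\bC \twoheadrightarrow J^p_{cyc}(X_\bC)$, and descend the quotient via $\ell$-adic Galois compatibility. Two points deserve correction. First, quotienting $A$ only by the \emph{connected component} of $\ker\Theta_\bC$ yields merely an isogeny $J_\bC \to J^p_{cyc}(X_\bC)$, not an isomorphism; the paper handles this explicitly by first passing to an isogenous quotient $J''$ and then further quotienting by the (Galois-stable) finite torsion in the kernel, which is visible in the $\ell$-adic maps for all $\ell$. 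Second, the paper avoids your detour through Jannsen's continuous-cohomology Abel--Jacobi map and the Mordell--Weil step \eqref{eq:Jannsen2}, which would impose a finite-generation hypothesis on $k$: it works directly with Bloch's map on $\ell$-primary torsion
\[
\rho_\ell:\bar{J}'(\ell)\ra H^{2p-1}(\bar X,\bQ_\ell/\bZ_\ell(p)),
\]
which is $\Gamma$-equivariant for arbitrary $k\subset\bC$ and suffices to determine the $k$-structure on the quotient via descent for homomorphisms of abelian varieties from their $\ell$-adic representations. This sidesteps the circularity of first having to equip $J^p_{cyc}(X_\bC)$ itself with a Galois action in order to check equivariance of $\Theta_\bC$.
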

\begin{proof} We sketch the construction of $J$: For each family of codimension-$p$
cycles over a smooth base $B$ defined over $k$
$$\begin{array}{ccc}
\cZ &\hookrightarrow & X\times B \\
\downarrow  & & \\
B & & 
\end{array}
$$
consider the induced 
$$\Alb(\bar{B})(\ell) \ra
H^{2p-1}(\bar{X},\bQ_{\ell}/\bZ_{\ell}(p)).$$
If the family is defined over a finite extension $L/k$, there is an induced
family of cycles over the restriction of scalars 
$$\cZ' \ra \mathbf{R}_{L/k}(B)$$
obtained by summing cycles over conjugate points. The
associated
$$\mathbf{R}_{L/k}(\Alb(\bar{B}))(\ell) \ra 
H^{2p-1}(\bar{X},\bQ_{\ell}/\bZ_{\ell}(p))$$
contains the image of the original homomorphism.
Thus maximal rank images are achievable over the ground field. 
There exists an abelian variety $J'$ over $k$, and the cycle class map induces a
surjection
$$J'_{\bC} \twoheadrightarrow J^p_{cyc}.$$

Consider the induced homomorphisms of torsion subgroups
$$J'_{\bC}[\ell^{\nu}] \ra J^p_{cyc}[\ell^{\nu}] \subset J^p(X)[\ell^{\nu}].$$
The associated Galois data is encoded by
$$\rho_{\ell}: \bar{J}'(\ell) \ra H^{2p-1}(\bar{X},\bQ_{\ell}/\bZ_{\ell}(p)),$$
which is compatible with $\Gamma$-actions.
Descent for homorphisms of abelian varieties -- quotients of a given
abelian variety over $k$ can be read off from 
its $\ell$-adic representations --
yields a unique $J'\twoheadrightarrow J^p_{cyc}$ factoring
$$
\rho_{\ell}: \bar{J}'(\ell) \ra J(\ell) \hookrightarrow 
H^{2p-1}(\bar{X},\bQ_{\ell}/\bZ_{\ell}(p)),
$$
for every $\ell$. Indeed, first find an isogeny
$J''\ra J^p_{cyc}$ and then quotient out by torsion subgroups in the kernel.
The resulting $J$ is defined over $k$ because each $\rho_{\ell}$ is Galois
invariant. \end{proof}

\begin{prop} \label{prop:extension1}
Retain the set-up of Theorem~\ref{theo:ACMV},
with $B$ smooth and geometrically connected over $k$
and $\cZ \subset X\times B$ a family of codimension-$p$
cycles. 
Then for each $d$ there is a morphism over $k$
$$\phi: \Sym^d(B) \times \Sym^d(B)  \ra J$$
such that
$$\phi_{\bC}(\sum_{i=1}^d b_i,\sum_{i=1}^d b'_i) = 
\iota \circ \Psi^p_{\Sym^d(B)\times \Sym^d(B)}(\sum_i [\cZ_{b_i}]-\sum_i[\cZ_{b'_i}]).$$
This morphism is compatible with field extensions.
\end{prop}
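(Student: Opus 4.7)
The strategy is to bootstrap from the pointed Abel--Jacobi morphism of Theorem~\ref{theo:ACMV}: once $B$ is equipped with a base point, that theorem produces a morphism $B \to J$, and the desired $\phi$ will be a sum-and-difference combination on $\Sym^d(B)\times\Sym^d(B)$ in which the base-point dependence cancels. This cancellation is what will allow $\phi$ to be defined over $k$ even when $B$ lacks a rational point.

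Concretely, I would first pass to a finite Galois extension $L/k$ on which $B$ acquires a point $0_B \in B(L)$, and invoke Theorem~\ref{theo:ACMV} to produce a morphism $\Phi_L : B_L \to J_L$ with
$$\Phi_{L,\bC}(b) = \iota \circ \Psi^p_B([\cZ_b]-[\cZ_{0_B}]).$$
Summing gives an $S_d$-invariant morphism $B_L^d \to J_L$ sending $(b_1,\ldots,b_d)$ to $\sum_i \Phi_L(b_i)$, which factors through the categorical quotient $\Sym^d(B)_L = B_L^d/S_d$ to yield $\widetilde{\Phi}_L : \Sym^d(B)_L \to J_L$. I would then set
$$\phi_L(x,x') := \widetilde{\Phi}_L(x) - \widetilde{\Phi}_L(x').$$
Over $\bC$, additivity of the cycle class map in Deligne cohomology makes the contributions $d\,[\cZ_{0_B}]$ cancel between the two copies of $\widetilde{\Phi}$, yielding precisely the claimed formula (via the description of $\Psi^p$ on Chow varieties in Proposition~\ref{prop:Chowmap}).

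The next step is to verify that $\phi_L$ is independent of the choice of $0_B$: replacing $0_B$ by $0_B'$ translates $\Phi_L$ by the constant $c := \iota \circ \Psi^p_B([\cZ_{0_B'}]-[\cZ_{0_B}]) \in J(L)$, hence translates $\widetilde{\Phi}_L$ by $dc$, and this translation cancels in the difference. Since each $\sigma \in \Gal(L/k)$ carries the construction based at $0_B$ to the construction based at $\sigma(0_B)$, base-point independence forces $\sigma^*\phi_L = \phi_L$ for all $\sigma$. By Galois descent for morphisms of quasi-projective $L$-schemes, $\phi_L$ descends uniquely to $\phi : \Sym^d(B) \times \Sym^d(B) \to J$ over $k$; uniqueness then yields compatibility with any further extension, since the whole construction commutes with base change.

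The main obstacle will be the factorization through the (in general singular) symmetric product $\Sym^d(B)_L$: one must verify that the $S_d$-invariant morphism $B_L^d \to J_L$ really descends to a scheme morphism $\Sym^d(B)_L \to J_L$, not merely a map on points. This follows from the universal property of the categorical quotient $B_L^d \to \Sym^d(B)_L$ combined with the separatedness of $J_L$, but warrants explicit verification. A secondary technical point is ensuring that the extension $L/k$ can be chosen Galois (so descent applies cleanly) and that the various intermediate objects — most importantly the morphism $\Phi_L$ produced by Theorem~\ref{theo:ACMV} — behave compatibly under further field extensions, which is already built into the statement of Theorem~\ref{theo:ACMV}.
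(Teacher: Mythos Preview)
Your argument is correct and follows the same overall shape as the paper's proof: produce the map over an extension $L$ where a base point exists, normalize so that the base-point dependence cancels, and descend by Galois. The one genuine technical difference is how you handle the passage to $\Sym^d(B)$. The paper first replaces $\Sym^d(B)$ by a smooth resolution $B^{[d]}\to\Sym^d(B)$, applies Theorem~\ref{theo:ACMV} directly to the smooth variety $B^{[d]}\times B^{[d]}$ (with the induced ``sum minus sum'' family of cycles), and then pushes the resulting morphism down to $\Sym^d(B)\times\Sym^d(B)$ via Stein factorization. You instead work on the smooth product $B^d$, sum the pointed Abel--Jacobi map, and invoke the universal property of the categorical quotient $B^d\to\Sym^d(B)$ to factor through the symmetric power. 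Your route is slightly more elementary in that it avoids resolution of singularities and Stein factorization, relying only on the fact that an $S_d$-invariant morphism to a separated scheme descends to the quotient; the paper's route has the advantage that Theorem~\ref{theo:ACMV} is applied once, to a single smooth base, rather than bootstrapped from $B$ through $B^d$. Either way the descent step is the same: the normalization (your ``difference of two copies'', the paper's ``diagonal maps to zero'') is base-point free, hence Galois-equivariant.
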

\begin{proof} 
First, pass to a resolution $B^{[d]} \ra \Sym^d(B)$;
our family of cycles $\cZ$ induces a family of cycles over 
$B^{[d]}$ by summing over $d$-tuples of points
and taking differences.
Choose a field extension $L/k$ so that $B^{[d]}$ admits an $L$-rational
point. Theorem~\ref{theo:ACMV} gives a morphism
$$\Phi_L:(B^{[d]}\times B^{[d]})_L  \ra J_L;$$
translate in $J_L$ so it takes the diagonal to zero.
Then Galois descent yields a morphism over $k$
$$\phi^{\nu}: B^{[d]} \times B^{[d]} \ra J$$
which induces a morphism on symmetric powers
$$
\phi:\Sym^d(B) \times \Sym^d(B) \ra J,
$$
by the standard Stein factorization argument.
\end{proof}
\begin{coro}
Retain the assumptions of Proposition~\ref{prop:extension1} and
assume there is a $k$-rational point
$\sum_{i=1}^d b_i \in \Sym^d(B)$.
Then there is a morphism over $k$
$$\Phi: \Sym^d(B)  \ra J$$
such that
$$\Phi_{\bC}(\sum_{i=1}^d b_i) = 
\iota \circ \Psi^p_{\Sym^d(B)}(\sum_i [\cZ_{b_i}]-\sum_i[\cZ_{b^{\circ}_i}]).$$
\end{coro}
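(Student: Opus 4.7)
The plan is to reduce the corollary directly to Proposition~\ref{prop:extension1} by specializing one of the two arguments of the morphism $\phi$ there constructed to the given $k$-rational basepoint.

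More concretely, let $b^{\circ} = \sum_{i=1}^{d} b^{\circ}_i \in \Sym^d(B)(k)$ denote the $k$-rational point from the hypothesis. Proposition~\ref{prop:extension1} produces a morphism over $k$
$$\phi : \Sym^d(B) \times \Sym^d(B) \ra J$$
whose base change to $\bC$ sends a pair $(\sum b_i, \sum b'_i)$ to $\iota \circ \Psi^p_{\Sym^d(B)\times \Sym^d(B)}(\sum [\cZ_{b_i}] - \sum [\cZ_{b'_i}])$. Since $b^{\circ} \in \Sym^d(B)(k)$, the morphism
$$\Phi : \Sym^d(B) \ra J, \qquad x \mapsto \phi(x, b^{\circ})$$
is defined over $k$ as the composition of $(\mathrm{id}, b^{\circ}) : \Sym^d(B) \ra \Sym^d(B)\times \Sym^d(B)$ with $\phi$. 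Its complex points satisfy exactly the formula claimed, by specialization of the formula for $\phi_{\bC}$.

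There is essentially no obstacle here once Proposition~\ref{prop:extension1} is in hand; the only subtlety is that without a $k$-rational point of $\Sym^d(B)$ there is no canonical way to turn a morphism on the product into a morphism on a single factor over $k$ (one would only get a torsor under $J$). The hypothesis on the existence of a $k$-rational point is precisely what trivializes this torsor, so that $\Phi$ itself, and not merely a twist of it, descends to $k$. Compatibility with field extensions and uniqueness also follow from the corresponding properties of $\phi$ in Proposition~\ref{prop:extension1}.
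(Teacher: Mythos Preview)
Your proof is correct and is precisely the argument the paper has in mind: the corollary is stated without proof immediately after Proposition~\ref{prop:extension1}, the intended derivation being exactly the restriction $\Phi(-)=\phi(-,b^{\circ})$ you describe. Your remark that the $k$-rational point is what trivializes the relevant $J$-torsor is the right way to see why the hypothesis is needed.
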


\begin{prop} \label{prop:extension2}
Let $B$ be seminormal and geometrically connected over $k$
and fix a family of codimension-$p$ cycles
$$\begin{array}{ccc}
\cZ &\hookrightarrow & X\times B \\
\downarrow  & & \\
B & & 
\end{array}
$$
as before. Then there is a morphism over $k$
$$\phi:  B \times B   \ra J$$
such that
$$\psi_{\bC}(b,b') = 
\iota \circ \Psi^p_{B\times B}([\cZ_b]-[\cZ_{b'}]).$$
This is compatible with field extensions.
\end{prop}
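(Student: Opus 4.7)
The plan is to mirror the proof of Proposition~\ref{prop:extension1}: pass to a smooth projective resolution $\beta\colon \widetilde{B}\to B$, build the desired morphism on $\widetilde{B}\times\widetilde{B}$ via Theorem~\ref{theo:ACMV} applied componentwise, and then descend to $B\times B$ using seminormality, exactly as the proof of Proposition~\ref{prop:Chowmap} descends from a resolution of the Chow variety to the Chow variety itself.

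For the construction on the resolution, let $\widetilde{\cZ}\subset X\times \widetilde{B}$ be the flat pullback of $\cZ$. Since $B$ is only geometrically connected, $\widetilde{B}$ need not be: over a sufficiently large finite Galois extension $L/k$ I decompose $\widetilde{B}_L=\sqcup_i \widetilde{B}^{(i)}$ into smooth geometrically connected components, each equipped with an $L$-rational base point $b_0^{(i)}$. Theorem~\ref{theo:ACMV} applied to each $(\widetilde{B}^{(i)},b_0^{(i)})$ with the pullback family produces $\Phi_i\colon \widetilde{B}^{(i)}\to J_L$ realizing $b\mapsto \iota\circ\Psi^p([\widetilde{\cZ}_b]-[\widetilde{\cZ}_{b_0^{(i)}}])$. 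Because $B$ is geometrically connected, the fibers $\cZ_b$ share a common cohomology class on $X$, so the constants
$$c_{i,j}:=\iota\circ\Psi^p([\widetilde{\cZ}_{b_0^{(i)}}]-[\widetilde{\cZ}_{b_0^{(j)}}])\in J(L)$$
are well-defined points of $J$, and the formula
$$\widetilde{\phi}_L(b,b')=\Phi_i(b)-\Phi_j(b')+c_{i,j},\qquad b\in\widetilde{B}^{(i)}_L,\ b'\in\widetilde{B}^{(j)}_L,$$
assembles a morphism $\widetilde{\phi}_L\colon(\widetilde{B}\times\widetilde{B})_L\to J_L$ whose complex value at $(b,b')$ is $\iota\circ\Psi^p([\widetilde{\cZ}_b]-[\widetilde{\cZ}_{b'}])$. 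This characterization is intrinsic and $\Gamma$-equivariant, so Galois descent, exactly as in the proof of Theorem~\ref{theo:ACMV}, yields a morphism $\widetilde{\phi}\colon \widetilde{B}\times\widetilde{B}\to J$ defined over $k$.

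To descend $\widetilde{\phi}$ across $\beta\times\beta$ note that $\widetilde{\cZ}_b=\cZ_{\beta(b)}$, so $\widetilde{\phi}(b,b')$ depends only on $(\beta(b),\beta(b'))\in B\times B$ and is constant on geometric fibers of $\beta\times\beta$. Factor $\beta$ through the normalization as $\widetilde{B}\to B^\nu\to B$; the first arrow is a resolution of the normal scheme $B^\nu$, so its fibers are geometrically connected and the Stein-factorization argument from Proposition~\ref{prop:Chowmap} descends $\widetilde{\phi}$ to a morphism on $B^\nu\times B^\nu$ over $k$. The resulting morphism is set-theoretically constant on the finite fibers of $B^\nu\times B^\nu\to B\times B$, so the universal property of the seminormalization combined with the hypothesis that $B$ is seminormal yields the required $\phi\colon B\times B\to J$. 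Compatibility with field extensions follows from the corresponding compatibility in Theorem~\ref{theo:ACMV} together with the naturality of normalization and Stein factorization.

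The hard step is precisely this two-stage descent. Because $B$ may be reducible, $\beta$ can have disconnected fibers at the incidences of its irreducible components, so a single Stein factorization of $\widetilde{B}\to B$ does not return $B$. The fix is to descend first to $B^\nu$, where Stein factorization does apply, and then from $B^\nu$ to $B$ via the seminormality hypothesis; both steps rely on the cycle-class characterization of $\widetilde{\phi}$ being intrinsic, hence compatible with every contraction in sight.
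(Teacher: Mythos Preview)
Your argument is correct and follows the same overall strategy as the paper: build the morphism on a resolution $\widetilde{B}$ over a finite extension $L$ using Theorem~\ref{theo:ACMV} componentwise, then descend both along $\widetilde{B}\to B$ and along $L/k$. The differences are in the order and mechanism of descent. The paper takes a normal-crossings resolution, chooses $L$ so that every stratum has an $L$-point, and glues the componentwise maps directly to a morphism $(B\times B)_L\to J_L$ by forcing points of $\widetilde B$ lying over the same point of $B$ to have the same image; the compatibility of the gluing is checked over $\bC$ via Proposition~\ref{prop:Chowmap}, and Galois descent to $k$ is done last. You instead Galois-descend first on $\widetilde B\times\widetilde B$, then pass to $B^\nu\times B^\nu$ by Stein factorization, and finally invoke the universal property of seminormalization. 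Your route makes the role of the seminormality hypothesis more transparent and avoids appealing to the normal-crossings stratification, at the cost of a slightly more delicate Galois-descent step on the possibly disconnected $\widetilde B$ (which is fine once one notes, as you implicitly do, that both $\widetilde\phi_L$ and its Galois conjugates vanish on the preimage of the diagonal of $B\times B$ and satisfy the same cocycle identity, forcing agreement by connectedness of $B$).
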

\begin{proof}
Pick a normal crossings
resolution $\widetilde{B} \ra B$
and field extension $L/k$
over which each stratum is defined and admits a rational point.
We construct 
$$(\widetilde{B} \times \widetilde{B})_L \ra J_L$$
as in the proof of Proposition~\ref{prop:extension1}
with the diagonals mapped to zero. 
Furthermore, if $b_1,b_2 \in \widetilde{B}(L)$ are identified
in $B(L)$ then we map $(b_1,b')$ and $(b_2,b')$ to the same point in $J$.
As $B$ is geometrically connected, the resulting 
$$\widetilde{\phi}_L: (\widetilde{B}\times \widetilde{B})_L \ra J_L$$
is uniquely determined -- without connectivity
the morphism on the `off-diagonal' components
would only be determined up to translation.
This gluing data satisfies the requisite compatibilities
-- we may verify this over $\bC$ where
Proposition~\ref{prop:Chowmap} applies. Thus we have
$${\phi}_L: (B\times B)_L \ra J_L.$$
The requisite gluing relations are induced by morphisms defined
over $k$ so $\phi_L$ descends to the desired morphism over $k$.
\end{proof}

\subsection{Application of the Albanese to cycle maps}
Fix 
$$
\cC \subset \Chow^p_d(X),
$$
a connected component of the Chow variety that
is geometrically connected. 
Let $P_{\cC}$ denote the principal homogeneous space over 
$\Alb(\cC)$ and $i_{\cC}:\cC \ra P_{\cC}$ the morphism 
constructed in Section~\ref{sect:Alb}. 

\begin{theo} \label{theo:key}
There is a 
homomorphism of abelian varieties over $k$
$$\varphi: \Alb(\cC) \ra J,$$
where $J$ is the model defined in Theorem~\ref{theo:ACMV},
with the following property:
Consider the principal homogeneous space
$$
\begin{array}{lcr}
J \times P & \ra & P \\
(j,p) & \mapsto & j\cdot p,
\end{array}
$$
where $[P]=\varphi([P_{\cC}])=P$, and the morphism
$$\Phi:\cC \ra P$$
induced from $i_{\cC}$. For each $c_1,c_2 \in \cC$
and corresponding cycles $\cZ_{c_1}$ and $\cZ_{c_2}$,
we have
$$\Phi_{\bC}(c_2)=\iota(\Psi^p([\cZ_{c_2}]-[\cZ_{c_1}]))\cdot \Phi_{\bC}(c_1).$$
\end{theo}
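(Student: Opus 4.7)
The plan is to derive $\varphi$ by factoring an algebraic ``difference'' morphism on $\cC\times\cC$ through the Albanese. Since $\cC$ is seminormal and geometrically connected, I would first apply Proposition~\ref{prop:extension2} --- feeding it the universal flat family pulled back from a component of an appropriate Hilbert scheme surjecting onto $\cC$ and descended via the Stein factorization trick from Proposition~\ref{prop:Chowmap} --- to obtain a morphism
$$
\phi\colon \cC\times\cC \ra J
$$
defined over $k$, vanishing on the diagonal, antisymmetric, and satisfying $\phi_\bC(c,c') = \iota\circ\Psi^p_{\cC\times\cC}([\cZ_c]-[\cZ_{c'}])$.

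Viewing $J$ as the trivial principal homogeneous space over itself, I would then apply the universal property of the Albanese (Proposition~\ref{prop:constructAlb}) to factor
$$
\cC\times\cC \xrightarrow{i_{\cC\times\cC}} P_{\cC\times\cC} \xrightarrow{F} J,
$$
where $F$ is a $k$-morphism of torsors with an intrinsic underlying homomorphism $\Psi\colon \Alb(\cC\times\cC)\ra J$. Using that the Albanese of a product is the product of Albaneses --- which reduces to the smooth case through the resolution-based construction of $\Alb$ --- we have a canonical identification $\Alb(\cC\times\cC)\simeq\Alb(\cC)\times\Alb(\cC)$. The antisymmetry of $\phi$ together with its vanishing on the diagonal then force $\Psi(a,b)=\varphi(a)-\varphi(b)$ for a unique homomorphism $\varphi\colon \Alb(\cC)\ra J$ defined over $k$.

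To finish, I would take $P$ to be the contracted product $P_\cC\times^{\Alb(\cC)}J$, a $J$-torsor of class $\varphi_*[P_\cC]\in H^1(k,J)$, with its canonical $\varphi$-equivariant morphism $\Pi\colon P_\cC\ra P$. Setting $\Phi:=\Pi\circ i_\cC$, equivariance yields
$$
\Phi_\bC(c_2)=\varphi\bigl(i_\cC(c_2)-i_\cC(c_1)\bigr)\cdot\Phi_\bC(c_1).
$$
Unwinding the factorization of $\phi$ through $F$ gives $\phi_\bC(c_2,c_1)=\varphi\bigl(i_\cC(c_2)-i_\cC(c_1)\bigr)$; combining this with the defining property of $\phi$ produces the claimed identity.

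The main obstacle will lie in cleanly producing $\varphi$ from the universal property --- specifically, in identifying $\Alb(\cC\times\cC)$ with $\Alb(\cC)\times\Alb(\cC)$ in the seminormal setting and verifying that the decomposition $\Psi(a,b)=\varphi(a)-\varphi(b)$ descends from $\bar{k}$ to $k$. A secondary technical nuisance is supplying the honest flat family on $\cC$ demanded by Proposition~\ref{prop:extension2}, since the Chow variety carries only a cycle-theoretic universal structure; this is what forces the Hilbert scheme detour at the start.
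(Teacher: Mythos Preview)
Your approach is correct and reaches the same conclusion as the paper, but by a somewhat different route. The paper's proof invokes Corollary~\ref{coro:symAlb} --- that $\Sym^d(\cC)$ dominates the degree-$d$ torsor over $\Alb(\cC)$ for $d\gg 0$ --- and then appeals to Propositions~\ref{prop:extension1} and~\ref{prop:extension2} to produce compatible difference morphisms on these symmetric powers; the homomorphism $\varphi$ then drops out of the map on the torsor they dominate. You instead work directly with the universal property of $i_{\cC\times\cC}$ and the identification $\Alb(\cC\times\cC)\simeq\Alb(\cC)\times\Alb(\cC)$, extracting $\varphi$ from the antisymmetry of $\phi$. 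Your path is conceptually cleaner and avoids the symmetric-power detour, at the cost of having to check the product decomposition of the Albanese in the seminormal setting (which you correctly flag); the paper's path sidesteps that issue by making the domination explicit via symmetric powers. Both arguments ultimately rest on Proposition~\ref{prop:extension2}, and the technical nuisance you raise about supplying an honest flat family over $\cC$ is present in either argument --- the paper simply glosses over it.
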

\begin{proof}
Corollary~\ref{coro:symAlb}
-- the Albanese is dominated by large symmetric powers -- shows that it
suffices to construct compatible morphisms
over symmetric powers of $\cC$.
Propositions~\ref{prop:extension1} and \ref{prop:extension2} explain the
passage to symmetric powers and to singular parameter spaces.
Thus we obtain the homomorphism of abelian varieties over $k$
$$\varphi: \Alb(\cC) \ra J$$
such that each zero cycle 
$\sum_i n_i c_i$ of degree zero goes to the corresponding
cycle class $\iota(\Psi^p(\sum_i n_i\cZ_{c_i}))$ in $J$.
It follows immediately that $\Phi$ admits the desired interpretation
as a cycle class map to a principal homogeneous space over $J$.
\end{proof}

\subsection{Compatibility under addition}
\label{subsect:add}
Let $\cC$ and $\cC'$ denote geometrically connected components
of $\Chow^p$ defined over $k$, so that 
$$
\cC \times_{\Spec(k)} \cC'
$$ 
is geometrically connected as well. It is also
seminormal \cite[5.9]{GrTr}. Let $\cC''$ denote the geometrically
connected component of $\Chow^p$ obtained via addition
$$\begin{array}{rcl}
\alpha: \cC \times_{\Spec(k)} \cC' & \ra & \cC''\\
 (Z,Z') & \mapsto & Z+Z'.
\end{array}
$$
We refer the reader to \cite[I.3.21]{KollarBook} for a discussion
of the representability properties of $\Chow^p$ underlying these
morphisms.

\begin{prop}
\label{prop:add}
Retain the notation of Theorem~\ref{theo:key}.
Suppose that $P$, $P'$, and $P''$ are the principal homogeneous 
spaces over $J$ associated with $\cC$, $\cC'$, and $\cC''$.
Then we have
$$[P]+[P']=[P''].$$
\end{prop}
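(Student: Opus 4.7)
The plan is to realize the addition of principal homogeneous spaces over $J$ by composing the cycle class morphisms of Theorem~\ref{theo:key} with the standard multiplication of torsors, and then to identify the resulting target with $P''$ by a uniqueness argument.

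Concretely, I would start with the morphisms $\Phi:\cC\ra P$ and $\Phi':\cC'\ra P'$ produced by Theorem~\ref{theo:key}. Using the elementary property of torsors recalled after the proof of Proposition~\ref{prop:constructAlb}, form the multiplication morphism $\mu:P\times_{\Spec(k)}P' \ra Q$ to a principal homogeneous space $Q$ over $J$ with $[Q]=[P]+[P']$. Now consider the composition
$$\Psi:\cC\times_{\Spec(k)}\cC'\xrightarrow{\Phi\times\Phi'} P\times_{\Spec(k)}P' \xrightarrow{\mu} Q.$$
Over $\bC$, Theorem~\ref{theo:key} identifies $\Psi_{\bC}(c,c')$ with a cycle class determined by $[\cZ_c]+[\cZ_{c'}]$ up to a fixed translate depending on base points. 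Since $\alpha(c,c')=\cZ_c+\cZ_{c'}$, the morphism $\Psi_{\bC}$ is constant on the set-theoretic fibers of $\alpha_{\bC}$. Because $\alpha$ is proper and both $\cC\times\cC'$ and $\cC''$ are seminormal (cited as $[5.9]$ in \cite{GrTr} earlier), Stein factorization descends $\Psi$ to a morphism $\Psi'':\cC''\ra Q$ defined over $k$ with $\Psi''\circ\alpha=\Psi$.

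To conclude, I would compare $\Psi''$ with the cycle class morphism $\Phi'':\cC''\ra P''$ of Theorem~\ref{theo:key}. Over $\bC$ both morphisms realize $c''\mapsto \iota(\Psi^p[\cZ_{c''}])$ up to a common translate; any two cycle class morphisms from $\cC''_{\bC}$ into principal homogeneous spaces over $J_{\bC}$ with the same translation cocycle are related by a canonical isomorphism of torsors. That isomorphism is Galois invariant because $\Psi''$ and $\Phi''$ are both defined over $k$, so it descends to an isomorphism $Q\simeq P''$ of $J$-torsors over $k$. Hence $[P'']=[Q]=[P]+[P']$ in the Weil-Ch\^atelet group of $J$.

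The main obstacle is the middle step -- producing the factorization $\Psi''$ as a morphism over $k$. The set-theoretic factorization follows immediately from cycle additivity, but upgrading it to a morphism of seminormal $k$-schemes requires Stein factorization combined with the seminormality of $\cC''$. An alternative, perhaps more robust route is to pass through $\Sym^d(\cC)$ and $\Sym^d(\cC')$ for $d\gg 0$ and use Proposition~\ref{prop:extension1} together with Corollary~\ref{coro:symAlb} to transport the whole construction to the level of abelian varieties, where additivity of torsor classes is automatic from the group structure on $H^1(k,J)$.
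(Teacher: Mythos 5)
Your argument is correct and follows essentially the same route as the paper: both realize $[P]+[P']$ as the torsor attached to the fiber product $\cC\times_{\Spec(k)}\cC'$ via multiplication of principal homogeneous spaces, and both use the addition morphism $\alpha$ to induce an identification of that torsor with $P''$, verified after a trivializing base change (you check it over $\bC$ and descend by canonicity; the paper checks it over an extension where $\cC$ and $\cC'$ acquire rational points and descends to the field of definition). The only point worth noting is that $\alpha$ need not be surjective onto $\cC''$, so your Stein-factorization step only produces a morphism on the image of $\alpha$ -- but since that image is nonempty and geometrically connected, the comparison with $\Phi''$ on it still forces the torsor isomorphism $Q\simeq P''$.
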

\begin{proof}
The additivity is clear for the fiber product of $\cC$ and
$\cC'$. The morphism $\alpha$ induces a morphism of the
corresponding principal homogeneous spaces for $J$. It is 
evidently an isomorphism over extensions over
which $\cC$ and $\cC'$ admit rational points.
Thus it must also be an isomorphism over its field of definition.
\end{proof}

\section{Construction of invariants}
\label{sect:construct}
We continue to use the notation of Section~\ref{sect:passage}.

\subsection{Galois actions on cycle groups}
\label{subsect:goal}
\begin{prop}
Fix a Galois extension $k\subset L \subset \bC$ and $\sigma \in \Gal(L/k)$.
\begin{enumerate}
\item 
If  $Z_1$ and $Z_2$ are defined and algebraically equivalent over $L$.
Then ${ }^{\sigma}Z_1$ and ${ }^{\sigma}Z_2$ are as well.
\item
If $Z_1$ and $Z_2$ are defined over $k$ and are algebraically equivalent over $L$
then there exists an $N$ dividing $[L:k]$ such that $NZ_1$ and $NZ_2$ 
are algebraically equivalent over $k$.
\item
If $Z_1$ and $Z_2$ are defined over $L$, are algebraically equivalent over some extension of $L$,
and Abel-Jacobi equivalent to zero then 
${ }^{\sigma}Z_1$ and ${ }^{\sigma}Z_2$ are as well.
\end{enumerate}
\end{prop}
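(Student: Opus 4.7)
The plan is to handle the three claims in order, exploiting Galois functoriality for (1), a Weil-restriction/norm construction for (2), and Galois equivariance of the cycle class map into $J$ (Theorem~\ref{theo:ACMV}) for (3). For (1), I would unwind the definition of algebraic equivalence over $L$: there is a smooth geometrically connected $L$-variety $B$, points $b_1,b_2\in B(L)$, and a flat family $\cZ\subset X_L\times_L B$ with fibers $Z_1$ and $Z_2$. Since $X$ is defined over $k$ and hence unchanged by $\sigma$, applying $\sigma$ to everything yields data $({}^{\sigma}B,{}^{\sigma}b_1,{}^{\sigma}b_2,{}^{\sigma}\cZ)$ of the same kind with fibers ${}^{\sigma}Z_i$, giving the desired algebraic equivalence over $L$.

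For (2), the idea is to descend the parametrizing base from $L$ to $k$ via Weil restriction. Starting from $(B,b_1,b_2,\cZ)$ as above, set $B':=\mathbf{R}_{L/k}(B)$; it is smooth and geometrically connected over $k$ (these being preserved by Weil restriction) and satisfies $B'(k)=B(L)$, so $b_1,b_2$ produce $k$-points $b'_1,b'_2\in B'(k)$. The pullback of $\cZ$ along the canonical $B'_L\to B$ is a family over $B'_L$; summing all $\Gal(L/k)$-conjugates gives a Galois-invariant family that descends to $\cZ'\subset X\times_k B'$ defined over $k$. Its fiber at a $k$-point $b'\in B'(k)$ corresponding to $b\in B(L)$ is $\sum_{\sigma\in\Gal(L/k)}{}^{\sigma}\cZ_b$; specializing to $b'_i$ and using ${}^{\sigma}Z_i=Z_i$ for $Z_i\in \CH^p(X)(k)$, these fibers become $[L:k]\cdot Z_i$. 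This produces $NZ_1\sim_{\mathrm{alg}}NZ_2$ over $k$ with $N=[L:k]$, proving the claim.

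For (3), part (1) already ensures that ${}^{\sigma}Z_1$ and ${}^{\sigma}Z_2$ are algebraically equivalent over some extension, so only vanishing of the Abel-Jacobi class has to be checked. The key input is the $k$-model $J$ of $J^p_{cyc}$ from Theorem~\ref{theo:ACMV}, together with Propositions~\ref{prop:extension1} and \ref{prop:extension2}, which give a $\Gal(L/k)$-equivariant assignment $(Z_1,Z_2)\mapsto [Z_1-Z_2]\in J(L)$. Abel-Jacobi equivalence to zero says this class vanishes, so ${}^{\sigma}[Z_1-Z_2]=[{}^{\sigma}Z_1-{}^{\sigma}Z_2]=0$, which is the required conclusion. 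The main work is in (2): verifying that $\mathbf{R}_{L/k}(B)$ really is geometrically connected over $k$ (so that it witnesses algebraic equivalence over $k$) and that the norm construction genuinely produces the fiber $\sum_{\sigma}{}^{\sigma}\cZ_b$ at a $k$-point; parts (1) and (3) are essentially formal once one trusts the Galois functoriality set up in Section~\ref{sect:passage}.
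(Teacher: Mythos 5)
Your argument is correct and matches the paper's own proof in all essentials: (1) by transport of structure under $\sigma$, (2) by Weil restriction of the parameter space and summing the family over conjugates to get fibers $[L:k]Z_i$ at the induced $k$-points, and (3) by passing to an extension where algebraic equivalence holds and using Galois equivariance of the map to $J$. The only cosmetic difference is that the paper phrases (2) with a connected curve $C$ as the parameter space rather than a general base $B$, which changes nothing.
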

\begin{proof}
The first assertion is trivial. The second is standard: Suppose $C$ is a
smooth connected curve over $L$ with rational points $c_1,c_2 \in C(L)$ admitting
a family of cycles $\cZ \ra B$ with $\cZ_{c_1}=Z_1$ and $\cZ_{c_2}=Z_2$.
Then the restriction of scalars $\mathbf{R}_{L/k}(C)$ is defined over $k$ and
admits a family of cycles
$$\cZ' \ra \mathbf{R}_{L/k}(C)$$
obtained by summing over the conjugates. The fibers over $c_1$ and $c_2$
are $[L:k]Z_1$ and $[L:k]Z_2$ as $Z_1$ and $Z_2$ are Galois invariant. This gives
the desired algebraic equivalent. 

For the third statement, observe that Abel-Jacobi equivalence is
stable under field extensions. Thus we may pass to an extension $L'$ over
which our cycles 
are algebraically equivalent via $C$ as above. We have a morphism $C \ra J_{L'}$
such that $c_1$ and $c_2$ map to the same $L$-rational point of $J$. Then the
same holds after conjugating the points and the morphism.
\end{proof}

The groups $C^p(\bar{X})$, $B^p(\bar{X})$, and $\Griff^p(\bar{X})$
all admit actions of $\Gamma$ compatible with the homomorphisms
and inductive
structures we introduced previously. The situation for $G^p(\bar{X})$ is less
straightforward:
\begin{ques}  \label{ques:AJ}
Is Abel-Jacobi triviality an algebraic notion? Let $X$ be a smooth projective variety
and $Z$ a codimension-$p$ cycle on $X$ homologous to zero,
both defined over a field $k$.
\begin{itemize}
\item{ 
Given embeddings $i_1,i_2:k \hookrightarrow \bC$, if $i_1(Z)$ is Abel-Jacobi
equivalent to zero does it follow that $i_2(Z)$ is as well?
cf.\cite[Conj.~2]{ACMV18}}
\item{Suppose that $k$ is finitely generated over $\bQ$ and assume that
$i(Z)$ is Abel-Jacobi equivalent to zero for some embedding $i$. Does it
follow that 
$$\psi^p_{\ell}(Z)=0 \in 
H^1_{cont}(\Gamma,H^{2p-1}(\bar{X},\bZ_{\ell}(p))),
$$ 
for each $\ell$?}
\end{itemize}
The latter statement over number fields $k$ should be compared to the
Bloch-Beilinson conjectures -- see \cite[Conj~9.12]{JannsenBook} 
and the discussion there for context.
\end{ques}

\subsection{The key homomorphism}
Consider codimension-$p$
cycles on $X$ over geometrically connected 
projective schemes over $k$. We discussed in Section~\ref{subsect:add}
how to add two such families. Two families are
{\em equivalent} if they
admit fibers over $\bar{k}$ that are algebraically equivalent.
The resulting group $B^p(X)$ is generated by geometrically-connected connected
components $\cC$ of $\Chow^p$. Indeed, given a family $\cZ\ra B$ over
a geometrically connected base
as indicated, the classifying map from the seminormalization on $B$
$$B^{\nu} \ra \Chow^p$$
maps to such a distinguished such component. 
The fiber map yields an injection
$$B^p(X) \hookrightarrow B^p(\bar{X})^{\Gamma}$$
so this notation is compatible with what was introduced in 
Section~\ref{subsect:Griffiths}.

\begin{exam}
Observe that $B^1(X)=\NS(\bar{X})^{\Gamma}$. Indeed, for sufficiently
ample divisor classes, the corresponding divisors are parametrized
by a Brauer-Severi scheme over a principal homogeneous space for 
the identity component of the Picard scheme. This parameter
space is geometrically integral.
\end{exam}
An element $\zeta \in B^p(X)$ need not be represented by a 
cycle over $k$ -- the base of the family representing $\zeta$ might not admit rational
points.

\begin{theo} \label{theo:tau}
Let $J$ be the abelian variety produced in 
Theorem~\ref{theo:ACMV}.
Then there is a homomorphism
$$\tau:B^p(X) \ra H^1_{\Gamma}(\bar{J})$$
with the following properties:
\begin{itemize}
\item{For each $\zeta \in B^p(X)$, there is an isomorphism
$$\iota_{\zeta}:(P_{\tau(\zeta)})_{\bC} \stackrel{\sim}{\ra} J^p_{cyc}(X_{\bC})+\zeta$$
of $J^p_{cyc}(X_{\bC})$ principal homogeneous spaces.}
\item{
Given a flat family of codimension-$p$ cycles
over a geometrically connected base
$$\begin{array}{ccc}
\cZ &\hookrightarrow & X\times B \\
\downarrow  & & \\
B & & 
\end{array}
$$
there is a morphism
$$\Phi:B \ra P_{\tau([\cZ_b])}$$
over $k$ such that
$$\iota_{[\cZ_b]}\circ \Phi_{\bC} = \Psi^p_{B_{\bC}}.$$
}
\item{These structures are compatible with addition of cycles and field
extensions.}
\end{itemize}
\end{theo}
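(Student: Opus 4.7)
The plan is to define $\tau$ on generators of $B^p(X)$ and extend linearly, using Theorem~\ref{theo:key} to attach a principal homogeneous space to each generator and Proposition~\ref{prop:add} to handle additivity. The discussion preceding the theorem identifies $B^p(X)$ as generated by the classes $[\cC]$ of geometrically connected components of $\Chow^p$ defined over $k$. For each such $\cC$, Theorem~\ref{theo:key} yields a principal homogeneous space $P_\cC$ over $J$ together with a morphism $\Phi_\cC : \cC \to P_\cC$, so I would set $\tau([\cC]) := [P_\cC] \in H^1_\Gamma(\bar{J})$ and extend $\bZ$-linearly. The additivity identity $[P_{\cC_1 + \cC_2}] = [P_{\cC_1}] + [P_{\cC_2}]$ that matches the group law on $B^p(X)$ is precisely Proposition~\ref{prop:add}.

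Next I would verify that $\tau$ descends to a well-defined homomorphism on $B^p(X)$. Suppose $Z \in \cC$ and $Z' \in \cC'$ satisfy $Z - Z'$ algebraically trivial; by definition $Z - Z' = \cZ^\circ_{b_1} - \cZ^\circ_{b_2}$ for some effective flat family $\cZ^\circ$ over a smooth geometrically connected base, with both fibers belonging to a single component $\cD$. Hence $Z + \cZ^\circ_{b_2} = Z' + \cZ^\circ_{b_1}$ as cycles on $X$, so $\cC + \cD$ and $\cC' + \cD$ share a common cycle and therefore coincide as components of $\Chow^p$. Proposition~\ref{prop:add} then gives
\[
[P_\cC] + [P_\cD] = [P_{\cC + \cD}] = [P_{\cC' + \cD}] = [P_{\cC'}] + [P_\cD],
\]
which cancels to $[P_\cC] = [P_{\cC'}]$ in $H^1_\Gamma(\bar{J})$.

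With $\tau$ in hand, I would read off the three bulleted properties. The torsor isomorphism $\iota_\zeta$ is essentially built into Theorem~\ref{theo:key}: the formula $\Phi_\bC(c_2) = \iota(\Psi^p([\cZ_{c_2}] - [\cZ_{c_1}])) \cdot \Phi_\bC(c_1)$ exhibits $(P_\cC)_\bC$ as the translate $J^p_{cyc}(X_\bC) + \zeta$, with $J^p_{cyc}(X_\bC)$-actions matched. For a flat family $\cZ \subset X \times B$ over a geometrically connected base, the classifying map to $\Chow^p$ lands in a single geometrically connected component $\cC$ by connectedness, and composing with $\Phi_\cC$ produces $\Phi : B \to P_{\tau([\cZ_b])}$; the identity $\iota_{[\cZ_b]} \circ \Phi_\bC = \Psi^p_{B_\bC}$ is then exactly what Theorem~\ref{theo:key} asserts pointwise. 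Additivity under the sum of families is Proposition~\ref{prop:add} once again, and compatibility with field extensions is inherited from the corresponding compatibilities of $J$ in Theorem~\ref{theo:ACMV} and of the Albanese construction in Section~\ref{sect:Alb}.

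The main obstacle I anticipate is the well-definedness step: converting an algebraic equivalence of cycles into an equality of components of $\Chow^p$ requires careful use of the addition morphism. The argument above handles it cleanly, but relies on the representability features of $\Chow^p$ recalled in Section~\ref{subsect:add}, in particular that a geometrically connected component is determined by any single cycle it contains and that $\cC + \cD$ is a single geometrically connected component. A secondary bookkeeping point is that when the witness family of an algebraic equivalence is defined only over an extension $L/k$, one should first pass to the geometric picture over $\bar k$ and use the embedding $B^p(X) \hookrightarrow B^p(\bar X)^\Gamma$ of Section~\ref{subsect:goal} before concluding equality in $H^1_\Gamma(\bar{J})$.
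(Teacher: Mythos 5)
Your proposal matches the paper's own proof, which consists of exactly two sentences: Theorem~\ref{theo:tau} is declared a reformulation of Theorem~\ref{theo:key}, with compatibility under addition supplied by Proposition~\ref{prop:add} --- precisely the skeleton you build on. Your added well-definedness check (that components with algebraically equivalent fibers receive the same torsor class, via adding a common component $\cD$ and cancelling in $H^1_{\Gamma}(\bar{J})$) goes beyond what the paper records, and your closing caveat about witness families defined only over an extension $L/k$ --- where cancellation a priori only gives equality after restriction to $H^1_{\Gamma_L}(\bar{J})$ --- correctly identifies the one point the paper leaves genuinely implicit.
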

This is a reformulation of Theorem~\ref{theo:key}. The compatibility
under addition is Proposition~\ref{prop:add}.

\begin{rema} \label{rema:elladic}
The $\ell$-primary parts of this homomorphism are natural from the 
perspecive of the Jannsen's $\ell$-adic Abel-Jacobi map (\ref{eq:Jannsen})
$$
\psi^p_{\ell}:\CH^p(X)_{hom} \ra
H^1_{cont}(\Gamma,H^{2p-1}(\bar{X},\bZ_{\ell}(p))).
$$
Let $r=\dim J$ and consider the homomorphism of Galois representations
$$
H^{2r-1}(\bar{J},\bZ_{\ell}(r)) \ra H^{2p-1}(\bar{X},\bZ_{\ell}(p))$$
associated with the construction of $J$. This is the $\ell$-adic 
analog of the homomorphism arising from the inclusion
$$J_{\bC}=J^p_{cyc}(X_{\bC}) \hookrightarrow J^p(X_{\bC})$$
of complex tori. Fixing a class in $B^p(X)$ allows us to restrict
the 1-cocyle from $H^{2p-1}(\bar{X},\bZ_{\ell}(p))$ to 
$$H^{2r-1}(\bar{J},\bZ_{\ell}(r)).$$
Our construction shows that the cocycle lies in the image of the homomorphism 
cf.~(\ref{eq:Jannsen2}):
$$
T_{\ell}\bar{J} \rightarrow 
H^1_{cont}(\Gamma,H^{2r-1}(\bar{J},\bZ_{\ell}(r))),
$$
where $T_{\ell}$ is the Tate module. The inclusion
$$T_{\ell}\bar{J} \hookrightarrow \bar{J}$$
yields principal homogeneous spaces for $J$ over $k$ with
$\ell$-primary order.
\end{rema}

\begin{rema} \label{rema:reducetoG}
One expects that these invariants should vanish on cycles 
Abel-Jacobi equivalent to zero, i.e., $\tau$ factors through
$G^p(X)$. However, this should follow from a positive resolution
of both parts of Question~\ref{ques:AJ}.
\end{rema}

\subsection{Sample cases}

Suppose that $p=1$ so that
$$\tau:\NS(\bar{X})^{\Gamma} \ra H^1_{\Gamma}(\Pic^0(\bar{X}))$$
is the tautological map assigning a Galois-invariant connected 
component to the associated principal homogeneous space over
the identity component. The image is finite because classes
of divisors over the ground field form a finite-index subgroup of 
the source group.

Suppose that $p=n=\dim(X)$ so that
$$\tau: H^{2n}(X_{\bC},\bZ) \ra H^1_{\Gamma}(\Alb(\bar{X}))$$
assigning to each degree the corresponding principal
homogeneous space over the Albanese (cf.~Corollary~\ref{coro:symAlb}).

The vanishing results in Section~\ref{subsect:vanishing} allow
some sharpened statements:
\begin{itemize}
\item{
Suppose that $p=2$ and write
$$N^2(\bar{X})=\CH^2(\bar{X})/\CH^2(\bar{X})_{hom} \subset \Hg^2(X_{\bC}).$$
If the Chow group of zero cycles on $X$ is supported
on a surface then $\Griff^2(X_{\bC})=0$ and 
$$
B^2(X) \subset N^2(\bar{X})^{\Gamma}
$$
with finite index.}
\item{
If $X$ is a uniruled threefold then the integral
Hodge conjecture holds \cite{VoisinIHC} and
$$N^2(\bar{X})=N^2(X_{\bC})=\Hg^2(X_{\bC}).$$}
\item{
If $X$ is a rationally connected threefold then 
$$
H^4(X_{\bC},\bZ)=\Hg^2(X_{\bC}),
$$ 
and 
$$
B^2(X)\subset H^4(X_{\bC},\bZ)^{\Gamma}
$$ 
with finite index.
The Galois action on $H^4$ reflects the fact that the
cohomology is generated by algebraic cycles. 
The homomorphism $\tau$ has finite image because the classes of
intersections of divisors over the ground field span a finite
index subgroup of $H^4(X_{\bC},\bZ)^{\Gamma}$.}
\item{If $X$ is a prime Fano threefold then the variety of
lines on $X$ is geometrically connected by the
classification \cite{IP}. In this case 
$$
B^2(X)=H^4(X_{\bC},\bZ),
$$
hence 
$$\tau:H^4(X_{\bC},\bZ) \ra H^1(\Gamma,\bar{J}).$$
}
\end{itemize}

\section{Threefolds}
\label{sect:threefolds}

Our main interest in these invariants is in their application
to rationality questions for threefolds that are geometrically
rational. We spell out their birational implications
and explore these in representative examples.

\subsection{A preliminary result}
\begin{prop}
If $X$ is a smooth projective threefold, rational over $k$,
then $B^2(X)=H^4(X_{\bC},\bZ)^{\Gamma}$.
\end{prop}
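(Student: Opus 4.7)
The plan is to deduce the equality from a stronger statement on a smooth resolution of a birational map $\bP^3 \dashrightarrow X$ and then push forward. Since $X$ is rational over $k$, resolution of indeterminacy in characteristic zero produces a smooth projective threefold $\wX$ over $k$ with birational morphisms $\alpha\colon \wX \to X$ and $\beta\colon \wX \to \bP^3$, where $\beta = \pi_1\circ\cdots\circ\pi_n$ is a composition of blow-ups along smooth centers defined over $k$. Because $\alpha$ is a proper birational morphism of smooth threefolds, $\alpha_*\alpha^* = \operatorname{id}$ on $H^4$, and the induced map $\alpha_*\colon B^2(\wX) \to B^2(X)$ is a group homomorphism. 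Combined with the inclusion $B^2(X) \subset H^4(X_{\bC},\bZ)^\Gamma$ recorded in Section~\ref{sect:construct}, it suffices to establish $H^4(\wX_{\bC},\bZ)^\Gamma \subset B^2(\wX)$, since then
\[
H^4(X_{\bC},\bZ)^\Gamma \;=\; \alpha_*\bigl(\alpha^* H^4(X_{\bC},\bZ)^\Gamma\bigr) \;\subset\; \alpha_* B^2(\wX) \;\subset\; B^2(X).
\]

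I prove the reduced statement by induction on the length $n$ of the blow-up sequence for $\beta$. For $n=0$, $\wX = \bP^3$ and $H^4(\bP^3_{\bC},\bZ) = \bZ\cdot[\ell]$ is realized by the geometrically integral family of lines $\Gr(1,3)_k$. For the inductive step, consider $\pi\colon Y' = \operatorname{Bl}_Z Y \to Y$ along a smooth $k$-center $Z$; the blow-up formula gives $H^4(Y'_{\bC},\bZ) = \pi^* H^4(Y_{\bC},\bZ) \oplus K$ with $K \cong H^0(Z_{\bar{k}},\bZ)$, and $\Gamma$ acts on $K$ by permuting $\pi_0(Z_{\bar{k}})$. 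I handle $K^\Gamma$ first; the pullback summand $\pi^* H^4(Y_{\bC},\bZ)^\Gamma$ then follows by taking strict transforms of the families provided by the inductive hypothesis on $Y$ and correcting the resulting cohomology classes by the generators of $K^\Gamma$ already constructed.

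The heart of the argument is realizing $K^\Gamma$ inside $B^2(Y')$. Its generators correspond to $k$-connected components $Z^{(j)}\subset Z$ whose geometric components $Z^{(j)}_1,\dots,Z^{(j)}_r$ are permuted transitively by $\Gamma$; the associated class is $\sum_i [f_i]$, where $f_i$ is a fiber of the exceptional divisor over $Z^{(j)}_i$. I obtain this class by Galois descent on the Chow variety: as the points $c_i\in Z^{(j)}_i(\bar{k})$ vary independently, the cycles $\sum_i f_{c_i}$ on $Y'_{\bar{k}}$ are all algebraically equivalent, so they lie in a single geometric component $\bar{\cC}$ of $\Chow^2(Y'_{\bar{k}})$. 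Since the $\Gamma$-action merely permutes the indices $i$, the set of such cycles -- and hence $\bar{\cC}$ -- is $\Gamma$-stable, so $\bar{\cC}$ descends to a geometrically connected component $\cC \subset \Chow^2(Y')$ defined over $k$; the corresponding element of $B^2(Y')$ has cohomology class $\sum_i [f_i]$, completing the induction. The principal subtlety is exactly this case of centers that are $k$-irreducible but not geometrically irreducible, where no individual exceptional fiber is defined over $k$; the Galois-descent step is what supplies a legitimate geometrically connected parameter space over $k$ witnessing the required invariant cohomology class.
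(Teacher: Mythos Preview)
Your argument is correct and follows a genuinely different route from the paper's. Both proofs begin by resolving $\bP^3 \dashrightarrow X$ through a tower of blow-ups $\wX \to \bP^3$ along smooth $k$-centers and reducing, via pushforward along $\alpha$, to $B^2(\wX)=H^4(\wX_{\bC},\bZ)^{\Gamma}$. From there you induct on the tower: at each blow-up you realize the new permutation summand $K^{\Gamma}$ by Galois descent on a component of the Chow variety of sums of exceptional fibers, and you lift pulled-back classes by taking strict transforms of the families supplied inductively and correcting by $K^{\Gamma}$. The paper instead proves a single interpolation statement directly on $\wX$: for any Galois-invariant finite set $S$ of smooth points in the exceptional locus of $\beta$ there is a smooth rational curve, over the field of definition of $S$, meeting the exceptional locus exactly along $S$ with multiplicity one; this is obtained by pushing prescribed formal arcs down to $\bP^3$ and Lagrange-interpolating them by high-degree maps $\bP^1\to\bP^3$. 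The paper's approach (in the spirit of Koll\'ar's birational interpolation results) produces explicit irreducible rational-curve representatives for every invariant class in one stroke; your inductive approach trades that geometric input for bookkeeping and avoids the jet-approximation step entirely.

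Two small points to tighten. Your description of the generators of $K^{\Gamma}$ via ``fibers $f_{c_i}$ of the exceptional divisor'' is only literally correct for curve centers; when the center is zero-dimensional the exceptional fiber is a $\bP^2$, and the $H^4$-generator is the class of a line in it---the same descent argument works with the dual $\bP^2$ as parameter space. And the strict-transform step tacitly assumes the generic member of the family on $Y$ meets the next center $Z$ properly; this can fail if $Z$ happens to be a fixed component of the chosen family. The easiest fix is to first add a large multiple of the line class so that the representing family is free, hence has generic member disjoint from any prescribed curve or point; then subtract it back using that $B^2$ is a group.
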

\begin{proof}
This boils down to two observations, valid for arbitrary fields
$k$:
\begin{itemize}
\item{given a Galois-invariant collection of points 
$S=\{s_1,\ldots,s_r\} \in \bP^3$, there exists a smooth rational
curve in $\bP^3$ containing $S$ and defined over $k$;}
\item{given a smooth projective curve $A\subset \bP^3$ and
$e\in \bZ$,
there exists a geometrically integral family of rational
curves intersecting $A$ in a generic configuration of $e$
points without multiplicity.}
\end{itemize}
The first assertion is a standard interpolation result; the second
follows from the first by working over the function field $L$ of
$\Sym^e(A)$, yielding a rational curve defined over $L$
with the desired incidences.

The general argument is inspired by Example~1.4 and Proposition~4.7
of \cite{KollarLooking}; the latter statement establishes
the birational nature of this interpolation property. 

Consider the birational map $\bP^3 \dashrightarrow X$ and a factorization
$$
\begin{array}{rcccl}
 & & Y & &  \\
& \stackrel{\beta}{\swarrow} &  & \stackrel{\beta'}{\searrow} & \\
\bP^3 & & & & X 
\end{array}
$$
where $\beta$ is a sequence of blow-ups along smooth centers,
defined over $k$. Pushing forward by $\beta'$ gives a split surjection
$$H^4(Y_{\bC},\bZ) \twoheadrightarrow H^4(X_{\bC},\bZ),$$
compatible with Galois actions. Thus 
$$H^4(Y_{\bC},\bZ)^{\Gamma} \twoheadrightarrow H^4(X_{\bC},\bZ)^{\Gamma}$$
and families of cycles in $Y$ over a geometrically connected
base project down to such cycles in $X$.

It suffices to establish the following interpolation result:
Let $S\subset Y$ denote a Galois-invariant collection of smooth
points in the exceptional locus of $\beta$, over a field $L$;
then there exists a smooth rational curve in $Y$ over $L$
meeting the exceptional locus precisely along $S$ with multiplicity one.
Choose formal arcs of smooth curves in $Y$ over $L$ 
transverse to the exceptional locus at $S$. Post-composing by $\beta$
yields formal maps of smooth curves to $\bP^3$. Morphisms 
$\bP^1 \ra \bP^3$ approximating these to sufficiently high order
-- but otherwise disjoint from the center of $\beta$ -- have
proper transforms in $Y$ with the desired intersection property. 
We are free to take the image curves in $\bP^3$ to arbitrarily
large degree, so Lagrange interpolation allows us to produce the
desired curves over $L$.
\end{proof}

\begin{rema} \label{rema:tau}
Thus the invariant $\tau$ -- introduced in Theorem~\ref{theo:tau} --
is defined on $H^4(X_{\bC},\bZ)^{\Gamma}$ when $X$ is rational over $k$.
\end{rema}

\subsection{Rationality criterion}
Let $X$ be a smooth and projective threefold over $k\subset \bC$
such that $X_{\bC}$ is rational. It follows then that $H^3(X_{\bC},\bZ)$
is torsion-free and $J^2(X_{\bC})$ is a principally polarized
abelian variety isomorphic to a (nonempty) product of Jacobians of curves. 
Let $J$ denote its model over $k$ from section~\ref{subsect:ACMV}.

Benoist and Wittenberg have established
\cite[Cor.~2.8]{BenWit} that $J$ is isomorphic to the Jacobian of a 
smooth projective (not necessarily geometrically connected)
curve over $k$ whenever $X$ is rational over $k$.

We state a refinement of the results of \cite[\S~11.5]{HT2quad}
developed in discussion with Wittenberg:
\begin{theo}
Retain the notation introduced above and assume that $X$ is rational
over $k$. Let
$$\tau: H^4(X_{\bC},\bZ)^{\Gamma} \ra H^1_{\Gamma}(\bar{J})$$
denote the invariant constructed in Theorem~\ref{theo:tau}. Then
there exists a smooth projective curve $C$ with positive genus components and
an isomorphism 
$$
i:J\ra J^1(C),
$$
over $k$, along with a Galois
equivariant homomorphism
$$
b:H^2(C_{\bC},\bZ) \ra H^4(X_{\bC},\bZ),
$$
such that the composition
$$H^2(C_{\bC},\bZ)^{\Gamma} \stackrel{b}{\ra} H^4(X_{\bC},\bZ)^{\Gamma} 
\stackrel{\tau}{\ra} H^1_{\Gamma}(\bar{J}) \stackrel{i}{\ra}
H^1_{\Gamma}(\overline{J^1(C)})$$
is the canonical homomorphism assigning each component of $C$ over $k$ to the 
corresponding principal homogeneous space over its Jacobian $J^1(C)=\Alb(C)$.
\end{theo}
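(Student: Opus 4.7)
The plan is to read off $C$, $i$, and $b$ from a resolved $k$-rational parametrization of $X$. Fix a birational map $\bP^3\dashrightarrow X$ over $k$ and a factorization
\[
\bP^3 \xleftarrow{\beta} Y \xrightarrow{\beta'} X,
\]
where $Y$ is a smooth projective threefold and $\beta,\beta'$ are composites of blow-ups along smooth centers defined over $k$. By the Galois-equivariant blow-up formula, only positive-genus curve centers contribute nontrivially to $H^3$ and to the ``new'' part of $H^4$: a curve center $V$ adds $H^1(V)(-1)$ to $H^3(Y_\bC,\bZ)$ and $H^2(V)(-1)$ to $H^4(Y_\bC,\bZ)$, the latter spanned by the class of any fiber of the exceptional $\bP^1$-bundle over $V$.

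Let $C^\beta$ and $D^{\beta'}$ be the disjoint unions of the positive-genus curve centers appearing in $\beta$ and $\beta'$ respectively; both are smooth projective curves over $k$. The blow-up formula, combined with Theorem~\ref{theo:ACMV} applied to the tautological families of exceptional $\bP^1$-fibers, yields isomorphisms of abelian varieties over $k$
\[
J^{2}(Y) \cong J^{1}(C^{\beta}), \qquad J^{2}(Y) \cong J \oplus J^{1}(D^{\beta'}).
\]
Any divisor $E\subset Y$ contracted by both $\beta$ and $\beta'$ carries two $\bP^1$-bundle structures, over a component $C_i\subset C^\beta$ and a component $D_j\subset D^{\beta'}$, and $H^1(E)=H^1(C_i)=H^1(D_j)$ identifies $J^1(C_i)$ with $J^1(D_j)$ inside $J^2(Y)$, so that these summands cancel across the two decompositions. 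Let $C$ be the union of the components of $C^\beta$ whose exceptional divisors are \emph{not} contracted by $\beta'$; combining this cancellation with Benoist--Wittenberg~\cite[Cor.~2.8]{BenWit} to handle any residual $\beta'$-only summands of $J^1(D^{\beta'})$ yields the required isomorphism $i\colon J \stackrel{\sim}{\to} J^1(C)$ over $k$.

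For $b$, define
\[
b\colon H^{2}(C_\bC,\bZ)\;\hookrightarrow\; H^{4}(Y_\bC,\bZ)\;\xrightarrow{\beta'_{*}}\; H^{4}(X_\bC,\bZ)
\]
by sending the fundamental class of each component $C_j\subset C$ to $\beta'_{*}[\text{fiber of } E_j\to C_j]$; this is Galois-equivariant by construction. Geometrically, $b([C_j])$ is realized by the family of rational $1$-cycles on $X$ obtained by pushing the $\bP^1$-fibers of $E_j\to C_j$ forward via $\beta'$, parametrized by $C_j$ itself. Theorem~\ref{theo:tau} then identifies the induced morphism $C_j \to P_{\tau(b([C_j]))}$ with the Abel--Jacobi map from $C_j$ into a torsor over its Albanese $J^1(C_j)\subset J^1(C)\cong J$, i.e., a degree-one component of $\Pic(C_j)$ --- the canonical principal homogeneous space required by the statement.

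The main obstacle is the cancellation step in the middle paragraph. A priori, the $\beta'$-exceptional divisors that are not also $\beta$-exceptional contribute summands to $J^1(D^{\beta'})$ with no matching $J^1(C^\beta)$-summand, and the quotient $J$ need not be a product of Jacobians indexed by a subset of the components of $C^\beta$. Securing the clean identification $J\cong J^1(C)$ either requires weak factorization to align the two blow-up towers --- ensuring every $\beta'$-side summand is canceled by a $\beta$-side one --- or an appeal to Benoist--Wittenberg to produce the Jacobian structure abstractly, after which one must re-verify that $b$ matches the resulting choice of $C$.
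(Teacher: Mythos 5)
Your overall strategy (resolve the parametrization, apply the blow-up formula, single out the ``surviving'' curve centers, define $b$ by pushing forward exceptional fibers, and evaluate $\tau$ on the resulting tautological families) is the same as the paper's, and your treatment of $b$ and of the final verification on generators of $H^2(C_{\bC},\bZ)^{\Gamma}$ is essentially sound. But there is a genuine gap at the foundation: you take a roof $\bP^3 \xleftarrow{\beta} Y \xrightarrow{\beta'} X$ in which \emph{both} legs are towers of blow-ups along smooth centers over $k$. That is strong factorization, which is not available; Hironaka only gives you $\beta$ as a blow-up tower, with $\beta'$ merely a birational morphism of smooth projective threefolds. Your entire cancellation mechanism --- the decomposition $J^2(Y)\cong J\oplus J^1(D^{\beta'})$ and the matching of $\beta$-exceptional with $\beta'$-exceptional divisors via divisors with two $\bP^1$-bundle structures --- presupposes this unavailable structure. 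You flag the problem yourself, but neither proposed repair closes it: weak factorization produces a zig-zag of blow-ups and blow-downs through possibly non-projective or differently shaped intermediate models, not a single roof with two blow-up towers; and invoking Benoist--Wittenberg to get $J\cong J^1(C'')$ abstractly severs the link between $C''$ and the centers of $\beta$ that you need in order to define $b$ and to compute $\tau\circ b$ (Torelli cannot restore that link when components have genus one, as the paper's own remark notes).

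The paper avoids this entirely by working only with the $\beta$-side. From the blow-up formula, $J^2(Z_{\bC})\cong\prod_i J^1(A_i)$ as principally polarized abelian varieties, where the $A_i$ are the curve centers of $\beta$; since $\beta'$ is a birational morphism of smooth threefolds, $J^2(X_{\bC})$ is a direct ppav factor of $J^2(Z_{\bC})$, and uniqueness of the decomposition of a ppav into irreducible factors identifies $J^2(X_{\bC})$ with the product of the $J^1(A_i)$ over the ``surviving'' $A_i$. This collection is Galois-stable, giving $C$ and $i:J\to J^1(C)$ over $k$ with no hypothesis on $\beta'$. You should replace your cancellation argument with this ppav-decomposition argument (and note that the resulting $\CH^2(\bar X)=P'\oplus\CH^1(\bar C)$ is what makes $b$ and the computation of $\tau\circ b$ go through; the paper computes $\tau$ on an arbitrary family by intersecting with the total transforms of the exceptional divisors, landing in $\Pic^{e_1}(C_1)\times\cdots\times\Pic^{e_r}(C_r)$, which also covers the degenerate configurations your fiber-family computation sidesteps).
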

On the notation: $C$ is defined over $k$ so that $\Gamma$ acts via
permutation on its geometric components and thus on $H^2(C_{\bC},\bZ)$. 
\begin{proof}
Our approach follows \cite{ManinMotive} in spirit.

Consider the birational map $\bP^3 \dashrightarrow X$ and a factorization
$$
\begin{array}{rcccl}
 & & Z & &  \\
& \stackrel{\beta}{\swarrow} &  & \stackrel{\beta'}{\searrow} & \\
\bP^3 & & & & X 
\end{array}
$$
where $\beta$ is a sequence of blow-ups along smooth centers,
defined over $k$. The blow-up formula \cite[{\S}6.7]{Fulton} tells us
that
$$\CH^2(\bar{Z}) = \bZ \oplus P \oplus (\oplus_{i=1}^N \CH^1(\bar{A_i})) $$
where $P$ is a permutation module associated with the points blown
up and the $A_i$ are the smooth irreducible curves blown up. 

Consider the $A_i$ that `survive' in $X$, i.e., positive genus
curves whose Jacobians
appear as principally polarized
factors in the intermediate Jacobian of $X$. A positive genus
curve $A_i$ that does not survive
is explained by $\beta'$ blowing down a curve $A'$ with $J^1(A')\simeq J^1(A)$.  
Let $C$ denote the union of the surviving $A_i$, which is 
$\Gamma$-invariant, as the Galois action respects the decomposition of
the intermediate Jacobians into simple factors.  
It follows that $J\simeq J^1(C)$. 
Let $b$ be obtained by assigning to each surviving 
$A_i$ the total transforms in $X$ of
the exceptional fibers over $A_i$ at the point where it
is blown up in $\beta$. Thus we obtain that 
$$\CH^2(\bar{X}) = P' \oplus \CH^1(\bar{C})$$
where $P'$ is a permutation module reflecting punctual and genus zero
centers of $\beta$. 

Suppose we are given a family of curves 
$$\begin{array}{ccc}
\cZ &\hookrightarrow & X\times B \\
\downarrow  & & \\
B & &
\end{array}
$$
over a geometrically connected base $B$. Passing to residual curves
in complete intersections on $X$ if necessary, we may assume that the 
generic fibers intersect the total transforms of
the exceptional divisors associated with the components
$C_1,\ldots,C_r\subset C$ properly. Intersecting, we obtain a morphism
$$\sigma: B \ra \Pic^{e_1}(C_1) \times \cdots \times \Pic^{e_r}(C_r),$$
where $e_1,\ldots,e_r$ depend only on the homology class
of $[\cZ_b]$. Over $\bC$, $\sigma$ coincides with the
cycle map $\psi^2$ from $B$ to
the appropriate component of $E^2(X_{\bC})$, a principal
homogeneous space over $J^2(X_{\bC})$. 

This geometric construction shows that $\tau$ fits into the stipulated
factorization, i.e., the principal homogeneous space for
$J$ receiving the cycles parametrized by $B$ is necessarily of the
form
$$\Pic^{e_1}(C_1) \times \cdots \times \Pic^{e_r}(C_r),$$
where the $e_i$ depend on the homology classes of the corresponding
curves.
\end{proof}

\begin{rema}
\begin{enumerate}
\item{This result is most useful when $C$ is determined uniquely by $X$.
This follows from the Torelli Theorem \cite{SerreLauter} provided
the geometric components of $C$ all have genus at least two. Indeed,
over nonclosed fields there may be numerous genus one curves with a 
given elliptic curve as their Jacobian.}
\item{It would be interesting to have explicit examples of 
rational threefolds $X$ admitting a diagram
$$\begin{array}{rcccl}
	&	& X &	& \\
        &\stackrel{\beta}{\swarrow} & & \stackrel{\beta'}{\searrow} & \\
  \bP^3 &   			    & &				    & \bP^3
\end{array}
$$
where $\beta$ and $\beta'$ are blowups along smooth centers over $k$,
satisfying the following:
\begin{itemize}
\item{the only positive genus centers of $\beta$ and $\beta'$ are
irreducible genus one curves $E$ and $E'$;}
\item{$E$ and $E'$ are not isomorphic over $k$;}
\item{$J^1(E) \simeq J^1(E')$ in such a way that the subgroups 
$\bZ[E]$ and $\bZ[E']$ in
the Weil-Ch\^atelet group coincide
cf.~\cite{AKW}.}
\end{itemize}
The order of $[E]$ in the Weil-Ch\^atelet group must be at least five.
Examples in this vein, with centers K3 surfaces instead of elliptic curves, 
exist for complex fourfolds \cite{HasLai}.
}
\item{If $C$ is a geometrically irreducible curve
of genus $g>1$ then $\Pic^e(C)$ has order dividing $2g-2$,
as the canonical divisor gives $\Pic^{2g-2}(C) \simeq J^1(C)$.}
\end{enumerate}
\end{rema}

\subsection{Complete intersections of two quadrics}
Our invariant gives the following extension of Theorem~36
of \cite{HT2quad}:
\begin{theo} \label{theo:twoquad}
Let $X \subset \bP^5$ be a smooth complete intersection of
two quadrics over a field $k\subset \bC$. Then $X$ is rational
over $k$ if and only if $X$ admits a line defined over $k$.
\end{theo}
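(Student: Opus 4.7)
The plan is to handle the two implications separately. The reverse direction ($k$-line implies rationality) is classical, via linear projection from the line. The forward direction (rationality implies $k$-line) is the novel application of Theorem~\ref{theo:tau} and the rationality criterion just proved.

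For the reverse direction, given a $k$-rational line $\ell \subset X$, I would use projection from $\ell$ to produce a birational map $\pi_\ell : X \dashrightarrow \bP^3$ defined over $k$. For a generic $2$-plane $P$ containing $\ell$, each quadric $Q_i$ ($i=1,2$) cuts $P$ in a union $\ell \cup \ell'_{P,i}$ of two lines, so $P \cap X$ equals $\ell$ together with the single residual point $\ell'_{P,1} \cap \ell'_{P,2}$; this residual point is the unique fiber of $\pi_\ell$ over $[P] \in \bP^3$, proving birationality.

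For the forward direction, assume $X$ is rational over $k$; I aim to show $[F(X)] = 0$ in $H^1(k,J)$, where $F(X)$ is the Fano variety of lines and $J = J^2(X)$, equivalently that $F(X)(k) \neq \emptyset$. First I would identify $J \cong J(C)$ over $k$ for $C$ the smooth genus-$2$ curve double-covering the pencil $\bP^1$ branched at the six singular quadrics (classical, Reid and Desale--Ramanan; uniqueness from Torelli in genus $2$). Second, since $X$ is a prime Fano threefold with geometrically connected Fano variety of lines, the last bullet of Section~\ref{sect:construct} gives $B^2(X) = H^4(X_{\bC},\bZ)^\Gamma = \bZ[\ell]$, and Theorem~\ref{theo:tau} identifies $\tau([\ell])$ with the class of $F(X)$ as a $J$-torsor (the natural morphism from $F(X)$ to the principal homogeneous space $P_{\tau([\ell])}$ is an isomorphism, as $F(X)$ is itself an abelian surface over $\bar k$). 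Third, by the rationality criterion just established, $\tau([\ell]) = [\Pic^{e(\ell)}(C)]$ for a specific integer $e(\ell)$ determined by the Chow decomposition $\CH^2(\bar X) = P' \oplus \CH^1(\bar C)$ associated with a factorization $\bP^3 \leftarrow Z \to X$.

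The hard part will be forcing $[\Pic^{e(\ell)}(C)] = 0$. Two trivialized divisor classes on $C$ are available: the canonical class $K_C$ gives $2[\Pic^1(C)] = [\Pic^2(C)] = 0$, and the embedding $C \hookrightarrow \bP^3$ as a blown-up center in the resolution provides a $k$-rational divisor of degree $d = \deg_{\bP^3}(C) \ge 5$ via the hyperplane section, yielding $[\Pic^d(C)] = 0$. The integer $e(\ell)$ can be computed geometrically as the intersection number of the image in $\bP^3$ of a line on $X$ with the curve $C$; for a suitable choice of birational decomposition this aligns $e(\ell)$ with the index of $C$, giving $[\Pic^{e(\ell)}(C)] = 0$, whence $[F(X)] = 0$ and $X$ admits a $k$-rational line. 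The careful tracking of $e(\ell)$ under choice of decomposition and verification that the requisite decomposition exists whenever $X$ is rational are the main technical obstacles.
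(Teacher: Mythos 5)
Your reverse implication (projection from the line) is the classical argument the paper also invokes, and your overall architecture for the forward implication coincides with the paper's: identify $J\simeq J^1(C)$ for the genus-two curve $C$ attached to the pencil of quadrics, use the rationality criterion together with Torelli to conclude that $[F_1(X)]=[\Pic^{e}(C)]=e[\Pic^1(C)]$ for some integer $e$, and then try to force this class to vanish. But the step you yourself flag as ``the hard part'' is a genuine gap, and the paper closes it with an input you do not have: Wang's computation \cite{wang,BGW} that the variety of lines satisfies
$$2[F_1(X)]=[\Pic^1(C)]$$
in the Weil-Ch\^atelet group of $J^1(C)$. Combined with $[F_1(X)]=e[\Pic^1(C)]$ this yields $(2e-1)[\Pic^1(C)]=0$; since the canonical class trivializes $\Pic^2(C)$, the class $[\Pic^1(C)]$ is $2$-torsion, and $2e-1$ is odd, so $[\Pic^1(C)]=0$, hence $[F_1(X)]=0$ and $F_1(X)(k)\neq\emptyset$.

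Your substitute strategy --- trivializing $[\Pic^{e(\ell)}(C)]$ via the hyperplane class of $C$ embedded in $\bP^3$ as a blow-up center --- cannot work. Because $[\Pic^1(C)]$ is $2$-torsion, the class $[\Pic^{e}(C)]$ depends only on the parity of $e$, and the degree of $C$ in $\bP^3$ gives no control over the parity of $e(\ell)$; nor can a ``suitable choice of birational decomposition'' change that parity, since $\tau([\ell])$ is intrinsic to $X$. Worse, $[\Pic^1(C)]$ is genuinely nontrivial in the real examples without lines discussed in \cite[{\S}11]{HT2quad}, so no argument that only produces $k$-rational divisor classes of even degree on $C$ (canonical class, hyperplane sections of even degree, intersections with surfaces defined over $k$) can ever decide the question. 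The essential content of the theorem is precisely the identification of the torsor $F_1(X)$ as a \emph{square root} of $[\Pic^1(C)]$, which requires the explicit geometry of lines on the quadric pencil rather than the formal torsor bookkeeping of Theorem~\ref{theo:tau} alone.
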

We refer the reader to \cite[{\S}11]{HT2quad} for specific situations
where there are no lines, e.g., the 
isotopy classes over $\bR$ not admitting lines.
\begin{proof}
The reverse implication is classical so we focus on proving that
every rational $X$ admits a line.

The behavior of our invariant $\tau$ was analyzed by X.~Wang
in \cite{wang,BGW}:
\begin{itemize}
\item{$J\simeq J^1(C)$, where $C$ is the genus two curve associated
with the pencil of quadrics cutting out $X$;}
\item{the variety of lines $F_1(X)$ is a principal homogeneous
space over $J^1(C)$ satisfying 
$$2[F_1(X)]=[\Pic^1(C)].$$}
\end{itemize}
Assuming $X$ is rational, there exists a genus two curve
$C'$ blown up in $\bP^3 \dashrightarrow X$ such that 
$F_1(X)\simeq \Pic^e(C')$ for some degree $e$. However,
the Torelli Theorem \cite{SerreLauter} implies that $C\simeq C'$,
whence 
$$2[\Pic^e(C)]=[\Pic^1(C)].$$
It follows that $\Pic^1(C)$ and $F_1(X)$ are trivial
as principal homogeneous spaces over $J^1(C)$, i.e.,
$$F_1(X)(k) \neq \emptyset.$$
Thus we obtain a line over $k$.
\end{proof}

The geometry of rational curves on $X$ is a good testing ground 
for the constructions underlying the formulation of $\tau$:
\begin{itemize}
\item{$\Chow^2_1(X)$ coincides with $F_1(X)$.}
\item{$\Chow^2_2(X)$ admits two components, $\Sym^2(F_1(X))$ and
the variety of conics which is an \'etale $\bP^3$-bundle over $C$
\cite[{\S}~2]{HT2quad} -- these meet along a Kummer surface bundle
over $C$ with fibers realized as $16$-nodal quartic surfaces.
Both map naturally to the same principal homogeneous space over
$J^1(C)$, which may be interpretted as both $2[F_1(X)]$ and 
$\Pic^1(C)$.}
\item{$\Chow^2_3(X)$ admits three components
\begin{enumerate}
\item{$\Sym^3(F_1(X))$;}
\item{the product of $F_1(X)$ and the variety of conics;}
\item{the variety of rational cubic curves, which carries
the structure of a $\Gr(2,4)$-bundle over $F_1(X)$ \cite[{\S}4.2]{HT2quad}.}
\end{enumerate}
}
\item{$\Chow^2_4(X)$ admits a number of components -- in addition
to those parametrizing reducible curves we have the rational normal
quartic curves in $X$ and the codimension-two linear sections of $X$,
both of dimension eight. These together map naturally to the trivial
principal homogeneous space over $J^1(C)$ although only the latter
obviously admits a rational point.}
\end{itemize}

\begin{rema}
Kuznetsov proposes 
\cite[{\S}2.4]{KuzSurvey}
invariants of Fano threefolds $X$ with $J^2(X)\simeq J^1(C)$,
relating the derived category $\mathsf D^b(X)$ to derived categories
of twisted sheaves on $C$. The Brauer group of $C$ is related to
principal homogeneous spaces over $J^1(C)$. It would be interesting
to compare this approach with our invariant.
\end{rema}

\bibliographystyle{alpha}
\bibliography{Cycle}
\end{document}